\theoremstyle{plain}
\newtheorem{theorem}{Theorem}
\newtheorem{lemma}{Lemma}
\newtheorem{corollary}{Corollary}
\newtheorem{proposition}{Proposition}
\theoremstyle{definition}
\newtheorem{definition}{Definition}
\theoremstyle{remark}
\newtheorem{example}{Example}
\newcommand{\Z}{\ensuremath{\mathbb{Z}}}
\newcommand{\Q}{\ensuremath{\mathbb{Q}}}
\newcommand{\Hom}{\operatorname{Hom}}
\newcommand{\Ext}{\operatorname{Ext}} 
\newcommand{\Ker}{\operatorname{Ker}}
\newcommand{\Rad}{\operatorname{Rad}}
\newcommand{\Soc}{\operatorname{Soc}}
\numberwithin{equation}{section} 
\begin{document}

\title{ On the Rings whose injective right modules are max-projective}

\author{Yusuf Alag\"oz}

\address{S\.{I}\.{I}RT UNIVERSITY\\ DEPARTMENT OF MATHEMATICS\\ S\.{I}\.{I}RT \\ TURKEY}

\email{yusuf.alagoz@siirt.edu.tr}

\author{ENG\.{I}N B\"uy\"uka\c{s}{\i}k}

\address{\.{I}ZM\.{I}R INSTITUTE OF TECHNOLOGY\\ DEPARTMENT OF MATHEMATICS\\ \.{I}ZM\.{I}R\\ TURKEY}

\email{enginbuyukasik@iyte.edu.tr}

\author{Haydar Baran Yurtsever}

\address{\.{I}ZM\.{I}R INSTITUTE OF TECHNOLOGY\\ DEPARTMENT OF MATHEMATICS\\ \.{I}ZM\.{I}R\\ TURKEY}

\email{haydaryurtsever@iyte.edu.tr}

\begin{abstract}
Recently, the rings whose injective right modules are $R$-projective (respectively, max-projective) were investigated and  studied in \cite{maxproj}.  Such ring are called right almost-$QF$ (respectively, max-$QF$).  In this paper, our aim is to give some further characterization of these rings over more general classes of rings, and address several questions about these rings. We obtain characterizations of max-$QF$ rings over several classes of rings including local, semilocal right semihereditary, right nonsingular right noetherian and right nonsingular right finite dimensional rings. We prove that for a ring $R$ being right almost-$QF$ and right max-$QF$ are not left-right symmetric. We also show that right almost-$QF$ and right max-$QF$ rings are not closed under factor rings. This leads to consider the rings all of whose factor rings are almost-$QF$ and max-$QF$.

\end{abstract}

\subjclass[2010]{16D50, 16D60, 18G25}

\keywords{Injective modules; max-projective modules; max-$QF$ rings.}

\maketitle

\section{introduction}

A module $M$ is said to be $R$-projective (respectively, max-projective) provided that each homomorphism from $M$ into $R/I$ where $I$ is any right (respectively, maximal) ideal, factors through the canonical projection $\pi:R\rightarrow R/I$. After Faith \cite{FaithQF} asking that ‘when does $R$-projectivity imply projectivity for all right $R$-modules?’, R-projective modules became an important field of research in Ring Theory and Homological Algebra. This problem is also studied in various recent
papers (see, \cite{testingforprojectivity, primehereditarynoether, trifilajfaithproblem, dualbeartrilifaj}). Moreover, $R$-projective (resp. max-projective) modules have been studied from many different perspectives by several authors (see, \cite{maxproj, nonsingular, A-perfect, B-perfect}). For instance, the rings whose flat right $R$-modules are $R$-projective and max-projective are characterized in \cite{A-perfect, B-perfect} and the rings whose nonsingular right $R$-modules are $R$-projective are characterized in \cite{nonsingular}.

\vspace{0.2cm}

A ring $R$ is said to be Quasi-Frobenius ($QF$) if $R$ is right (or left) self injective right (or left) Artinian. $QF$ rings were first introduced in \cite{Nakayama} by Nakayama, in the study of representations of algebras. A result of Faith states that $R$ is $QF$ if and only if every injective right $R$-module is projective (see \cite{FaithQF}). Motivated by this remarkable theorem, Alagöz et al. in \cite{maxproj}, considered the rings whose injective right modules are $R$-projective (respectively, max-projective). They call these rings right almost-$QF$ (respectively, max-$QF$). Several properties and structure of these rings are given in \cite{maxproj} over some particular rings. For example, almost-$QF$ and max-$QF$ rings are completely characterized over commutative Noetherian rings, right Noetherian local rings and right hereditary right Noetherian rings. Although remarkable distance has been recorded in these directions, still there are open problems on the structure of the rings that have been considered.

\vspace{0.2cm}

We first recall the following characterization over right hereditary and right Noetherian rings.

\begin{theorem} \cite[Theorem 1]{maxproj} \label{hereditary-noetherian} Let $R$ be a right Hereditary and right Noetherian ring. The following statements are equivalent.
\begin{enumerate}
\item[(1)] $R$ is right almost-$QF$.
\item[(2)] $R$ is right  max-$QF$.
\item[(3)] Every injective right $R$-module $E$ has a decomposition $E=A\oplus B$ where $\Rad(A)=A$ and $B$ is projective and semisimple.
\item[(4)] $R=S\times T$, where $S$ is a semisimple Artinian and $T$ is a right small ring.
\end{enumerate}
\end{theorem}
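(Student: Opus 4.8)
The plan is to establish the cyclic chain of implications $(1)\Rightarrow(2)\Rightarrow(3)\Rightarrow(4)\Rightarrow(1)$. The implication $(1)\Rightarrow(2)$ is immediate, since every maximal right ideal is in particular a right ideal, so any $R$-projective module is automatically max-projective; hence injectives being $R$-projective forces them to be max-projective. For $(2)\Rightarrow(3)$, I would exploit that $R$ is right Noetherian, so that by the Matlis--Papp theorem every injective right module $E$ splits as a direct sum $E=\bigoplus_i E_i$ of indecomposable injectives, and then classify each $E_i$. If $\Rad(E_i)=E_i$, collect it into $A$. Otherwise $E_i$ admits a nonzero map onto a simple module $S=R/I$ with $I$ maximal, and by max-projectivity this lifts to a nonzero $f\colon E_i\To R$. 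Here I would use heredity twice: $\Image(f)$ is a quotient of the injective $E_i$, hence injective, and simultaneously a submodule of $R_R$, hence projective; being an injective submodule of $R$ it is a direct summand, so the induced surjection $E_i\To\Image(f)$ splits, and indecomposability of $E_i$ forces $f$ to be injective. Thus $E_i\cong\Image(f)$ is a finitely generated projective--injective direct summand of $R$; collecting these as $B$ and the radical-full pieces as $A$ yields $\Rad(A)=A$ and $B$ projective, leaving only the point of upgrading each such $E_i$ to a \emph{simple} module so that $B$ is semisimple.

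For $(3)\Rightarrow(4)$ I would apply $(3)$ to $E=\E(R_R)$, writing $\E(R_R)=A\oplus B$ with $B$ projective and semisimple. The summand $B$ assembles the simple projective--injective right ideals of $R$, and I would capture them by an idempotent $e$ whose image is the sum of all such minimal summands. The task is then to show $e$ is central, so that $R=eR\times(1-e)R$ with $S=eR$ semisimple Artinian (a finite direct sum of simple injective--projective modules, the finiteness coming from $R$ being Noetherian) and $T=(1-e)R$ right small. Centrality reduces to showing there are no nonzero homomorphisms between the $S$-block and the $T$-block, which I would derive from the injectivity and semisimplicity of the $S$-summands together with the radical-fullness of everything sitting in the $T$-part.

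Finally, for $(4)\Rightarrow(1)$, write $R=S\times T$ via the central idempotent, so that every right $R$-module and every right ideal splits compatibly and an injective decomposes as $E=E_S\oplus E_T$ with $E_S$ injective over $S$ and $E_T$ injective over $T$. Over the semisimple Artinian factor $S$ every module is projective, so $E_S$ is $S$-projective; over the right small factor $T$ I would invoke the characterization of right small rings from \cite{maxproj}, which guarantees that injective right $T$-modules are $R$-projective. Since $\Hom$ respects the block decomposition and cross homomorphisms vanish, these combine to show $E$ is $R$-projective.

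The hard part will be the two structural steps inside $(2)\Rightarrow(3)$ and $(3)\Rightarrow(4)$: proving that a finitely generated indecomposable projective--injective summand of $R$ with proper radical is actually simple (so that $B$ is genuinely semisimple rather than merely projective), and then proving that the idempotent collecting these summands is central. Both hinge delicately on the simultaneous use of heredity---to pass freely between ``a quotient of an injective is injective'' and ``a submodule of a projective is projective''---and of the Noetherian hypothesis, which supplies finite decompositions and finitely generated summands. I expect the simplicity claim to require a careful socle analysis of the indecomposable injective $E_i\cong\E(\Soc(E_i))$, and the centrality to require ruling out homomorphisms between the two blocks; these are the genuine obstacles, whereas $(1)\Rightarrow(2)$ and $(4)\Rightarrow(1)$ are comparatively routine once the right small characterization is available.
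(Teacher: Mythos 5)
First, a point of reference: the paper does not actually prove this theorem; it is imported verbatim from \cite[Theorem 1]{maxproj}. The closest in-paper analogues are Lemma \ref{indecomposablenonsingularinjective}, Lemma \ref{noethnonsingularinjectives} and Theorem \ref{prop:semilocalrightsemihereditary}, whose proofs use exactly your strategy (Matlis decomposition into indecomposable injectives, then a splitting argument on each piece, then assembling a ring decomposition from the injective minimal right ideals). Measured against those, your outline has the right skeleton, but the two steps you yourself flag as ``the hard part'' are precisely the content of the theorem, and you leave both of them unproved; as written this is a plan, not a proof.

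Concretely, the first gap is the simplicity of the non-radical indecomposable summands. Your lifting argument correctly shows that each indecomposable injective $E_i$ with $\Rad(E_i)\neq E_i$ is a cyclic projective--injective direct summand of $R_R$, but that is strictly weaker than what $(3)$ demands: over the upper triangular matrix ring $\left(\begin{smallmatrix}k&k\\0&k\end{smallmatrix}\right)$ (hereditary, Noetherian), $e_{11}R$ is an indecomposable projective--injective module of length $2$, so ``projective--injective indecomposable'' does not imply ``simple'' from heredity and Noetherianness alone --- the max-$QF$ hypothesis must be used a second time. The efficient repair works with the simple quotient rather than with $E_i$: choose a maximal submodule $K$ of $E_i$; by heredity $E_i/K$ is a simple \emph{injective} module, hence max-projective by $(2)$; a simple max-projective module $\cong R/M$ is a direct summand of $R_R$ (lift the identity of $R/M$ through $\pi:R\to R/M$), hence projective; then $E_i\to E_i/K$ splits and indecomposability forces $K=0$. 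This one argument yields simplicity, projectivity and injectivity of $E_i$ simultaneously and supersedes your kernel computation. The second gap is $(3)\Rightarrow(4)$: ``capture the simple summands by an idempotent and show it is central'' is the assertion to be proved, not an argument. What is actually needed (and what the paper does carry out in the proof of Theorem \ref{prop:semilocalrightsemihereditary}) is: let $S$ be the sum of all injective minimal right ideals of $R$; check $S$ is two-sided (each $aS_i$ is again an injective minimal right ideal or zero), that $S\cap J(R)=0$ so $S$ is semisimple, that $S$ is finitely generated (Noetherian) and injective hence $S=eR$ is a direct summand, and that $\Hom(eR,(1-e)R)=0=\Hom((1-e)R,eR)$ because a nonzero map either way would split off an injective minimal right ideal outside $S$; only then is $e$ central. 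You also never argue that $T=(1-e)R$ is right small: apply $(3)$ to $\E(T_T)$ and note its projective semisimple part must vanish (its simple summands would be injective minimal right ideals annihilated by $e$ yet isomorphic to summands of $eR$), so $\Rad(\E(T_T))=\E(T_T)$ and the cyclic module $T$ is small in its injective hull. With these two steps filled in, the remaining implications you give ($(1)\Rightarrow(2)$ and $(4)\Rightarrow(1)$ via $\Hom(E,R/I)=0$ for radical-full modules over a Noetherian ring) are fine.
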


The main goal of this paper is to give further ring theoretic characterizations  and to study the structure   of almost-$QF$ and max-$QF$ rings for more general classes of rings.

\vspace{0.2cm}

In this respect, for some classes of rings including, local, semilocal right semihereditary, right nonsingular right finite dimensional and right Noetherian right nonsingular, we obtain some conditions that equivalent to being right max-$QF$.

\vspace{0.2cm}

In section 2, some generalizations of Theorem \ref{hereditary-noetherian} is obtained by replacing right hereditary with right nonsingular, and replacing right noetherian with right finite dimensional. Namely, we prove that a right finite dimensional and right nonsingular ring $R$ is right max-QF if and only if every injective right module $E$ has a decomposition $E= E_{1} \oplus E_{2}$ where $\Rad(E_1)=E_1$ and $E_{2}$ is projective if and only if $E(R_R)$ is max-projective and $\Rad(E)=E$ for every singular injective right $R$-module.

\vspace{0.2cm}

In section 3, we obtain a characterization of right max-$QF$ rings over local rings, and over semilocal right semihereditary rings.
We prove that, a local ring $R$ is right max-QF if and only if $R$ is either right small or right self injective with $\Ext_R(E,J(R))=0$, for each injective right $R$-module $E$. More generally,  we show that a semilocal right semihereditary ring $R$ is right max-QF  if and only if $R = S \times T$, where $S$ is semisimple Artinian and $T$ is right small ring if and only if the dual Goldie torsion theory splits.

\vspace{0.2cm}

Another problem we consider in this paper is, whether for a ring being max-$QF$ and almost-$QF$ is left-right symmetric or not. In section 4, we show that Small's famous Example (see, \cite[2.33]{lam}) is an example of a ring that is right almost-$QF$ (hence, right max-$QF$), but not left max-$QF$ (hence, not left almost-$QF$). Finally, an example is given in order to show that  max-$QF$ and almost-$QF$ rings are not closed under factor rings. This leads to the investigation of right super almost-$QF$ and right super max-$QF$ rings respectively i.e. the rings whose factor rings are right almost-$QF$ and right max-$QF$ respectively.

\section{some classes of max-QF rings}

Ring theoretic characterizations of max-$QF$ rings is only known for some particular classes of rings.
Complete characterization of max-$QF$ rings obtained for right Hereditary right Noetherian rings in \cite[Theorem 1]{maxproj} as it is stated in Theorem \ref{hereditary-noetherian}. This section is devoted to obtain the structure of max-$QF$ rings for more general classes of rings and generalize some results obtained in \cite{maxproj}.  In this context, the first aim is to examining these rings over right nonsingular rings. We start by recalling what is understood by a nonsingularity.

\vspace{0.2cm}

Recall that the singular submodule $Z(M)$ of a right $R$-module $M$ is the set of elements $m\in M$ such that $mI=0$ for some essential right ideal $I$ of $R$. A right module $M$ is called \emph{singular} if $Z(M) = M$, and \emph{nonsingular} if $Z(M)=0$. Thus, $R$ is called a right nonsingular ring if $Z(R_{R})=0$ (see, \cite{goodearl}).

\vspace{0.2cm}

We begin with the following lemma that we use in the sequel.

\begin{lemma}\label{SingularMaxprojective} Let $R$ be a right nonsingular ring and $E$ be a singular injective right $R$-module. Then $E$ is max-projective if and only if $\Rad(E) = E$.
\end{lemma}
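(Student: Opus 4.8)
The plan is to recast the condition $\Rad(E)=E$ as the vanishing of all homomorphisms from $E$ into simple modules, and then to play the singularity of $E$ against the nonsingularity of $R$. Two elementary facts drive the argument. First, $\Rad(E)=E$ holds if and only if $E$ has no maximal submodule, equivalently if and only if $\Hom(E,R/I)=0$ for every maximal right ideal $I$; this is because every simple right $R$-module is isomorphic to some $R/I$, and a nonzero map from $E$ into a simple module is surjective with maximal kernel. Second, any homomorphism $h\colon E\To R$ is zero: given $x\in E$, singularity of $E$ provides an essential right ideal $K$ with $xK=0$, whence $h(x)K=0$ and therefore $h(x)\in \Sin(R_R)=0$.

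For the implication $\Rad(E)=E\Rightarrow E$ max-projective, I note that max-projectivity is a condition only about maps into the modules $R/I$ with $I$ maximal. Since the hypothesis makes every such map the zero map, and the zero map factors through $\pi\colon R\To R/I$ via the zero lift, max-projectivity holds trivially. (This direction uses neither nonsingularity nor injectivity.)

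For the converse I would argue contrapositively. If $\Rad(E)\neq E$, then $E$ has a maximal submodule and hence a nonzero, necessarily surjective, homomorphism $f\colon E\To R/I$ for some maximal right ideal $I$. Were $E$ max-projective, $f$ would lift to some $g\colon E\To R$ with $f=\pi\circ g$. But $g$ is a map from the singular module $E$ into the nonsingular module $R_R$, so $g=0$ by the second fact above, and then $f=\pi\circ g=0$, contradicting the choice of $f$. Hence $E$ max-projective forces $\Rad(E)=E$.

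I do not expect a genuine obstacle here; the whole content sits in the converse, whose engine is the lift-then-vanish mechanism, where right nonsingularity of $R$ is exactly what annihilates the lift supplied by max-projectivity. The only points requiring care are the identification of $\Rad(E)=E$ with the absence of maps into simple modules, and the observation that the codomain of $\pi$ is the simple module $R/I$, so that a zero lift collapses $f$ itself. It is worth remarking that injectivity of $E$ plays no role in either implication; it is kept in the statement only because the singular injective modules are the ones that occur in the applications.
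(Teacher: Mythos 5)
Your proposal is correct and follows essentially the same route as the paper: the sufficiency is the trivial observation that $\Rad(E)=E$ kills all maps into simple modules, and the necessity rests on $\Hom(E,R)=0$ (singular source, nonsingular target) forcing any lift of a nonzero $f\colon E\to R/I$ to vanish. Your additional remarks — spelling out why $\Hom(E,R)=0$ and noting that injectivity of $E$ is not used — are accurate but do not change the argument.
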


\begin{proof}Sufficiency is clear. To prove the necessity, assume that $\Rad(E) \neq E$. Then $E$ contains a maximal submodule, and so there is a nonzero homomorphism $f: E \to R/I$ for some maximal right ideal $I$ of $R$. Since $E$ is singular, and $R$ is nonsingular $\Hom(E,\, R)=0$. Thus the map $f$ can not be lifted to a homomorphism from $E$ to $R$. Hence $E$ is not max-projective. This proves the necessity.
\end{proof}

Recall that a submodule $N$ of a right $R$-module $M$ is said to be closed in $M$, if $N$ has no proper essential extension in $M$. It is well known that every closed submodule $N$ of an injective module $M$ is a direct summand in $M$ (see, \cite{lam}).

\begin{lemma}\label{indecomposablenonsingularinjective} Let $R$ be a right nonsingular ring and $E$ be a nonsingular indecomposable injective right $R$-module. Then $E$ is max-projective if and only  if either $\Rad (E)=E$ or $E$ is projective and cyclic.

\end{lemma}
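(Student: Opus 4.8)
The plan is to prove both implications, disposing of the easy ``if'' direction first and reserving the structural work for the ``only if'' direction. For sufficiency, suppose first that $\Rad(E)=E$. Then $E$ has no maximal submodule, so for every maximal right ideal $I$ we have $\Hom(E,R/I)=0$: a nonzero map $E\to R/I$ would be surjective onto the simple module $R/I$, and its kernel would be a maximal submodule of $E$, contradicting $\Rad(E)=E$. Hence the lifting condition defining max-projectivity holds vacuously and $E$ is max-projective. If instead $E$ is projective, then it is max-projective because projectivity is a stronger lifting property. This settles the backward implication and uses neither indecomposability nor nonsingularity.

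For the forward direction, I would assume $E$ is max-projective with $\Rad(E)\neq E$ and show $E$ is projective and cyclic. Since $\Rad(E)\neq E$, the module $E$ has a maximal submodule, giving a nonzero homomorphism $f\colon E\to R/I$ for some maximal right ideal $I$. By max-projectivity $f$ lifts through the canonical projection $\pi\colon R\to R/I$ to a homomorphism $g\colon E\to R$ with $\pi g=f$; as $f\neq 0$, also $g\neq 0$.

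The key step is to prove that $g$ is injective, and this is where nonsingularity enters. Since $\Image(g)$ is a submodule of the nonsingular module $R_R$, it is nonsingular, so $E/\Ker(g)\cong \Image(g)$ is nonsingular. If $\Ker(g)$ admitted a proper essential extension $N$ inside $E$, then $N/\Ker(g)$ would be a nonzero singular submodule of the nonsingular module $E/\Ker(g)$, which is impossible; hence $\Ker(g)$ is closed in $E$. Because $E$ is injective, every closed submodule is a direct summand, and the indecomposability of $E$ together with $g\neq 0$ then forces $\Ker(g)=0$. Thus $g$ embeds $E$ as a submodule of $R$.

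To finish, $g(E)\cong E$ is injective, and an injective submodule of any module is a direct summand; hence $g(E)$ is a direct summand of $R_R$. Direct summands of $R_R$ have the form $eR$ for an idempotent $e\in R$, so $g(E)$ is both cyclic and projective, and transporting this back along the isomorphism $g$ shows that $E$ is projective and cyclic. The main obstacle is precisely the injectivity of $g$: the crucial observation is that the nonsingularity of $R$ makes $\Ker(g)$ closed, after which injectivity and indecomposability of $E$ do the remaining work.
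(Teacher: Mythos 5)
Your proof is correct and follows essentially the same route as the paper's: lift a nonzero map $E\to R/I$ to some $g\colon E\to R$, use nonsingularity of $\Image(g)\subseteq R_R$ to conclude that $\Ker(g)$ is closed and hence a direct summand of the indecomposable injective $E$, forcing $g$ to be a monomorphism onto an injective, hence direct summand, hence cyclic projective right ideal. The only cosmetic differences are that you prove the closedness of $\Ker(g)$ directly (via the standard fact that essential extensions have singular quotients) where the paper cites Sandomierski's lemma, and that you spell out the sufficiency direction the paper dismisses as clear.
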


\begin{proof}

Sufficiency is clear. Suppose $\Rad(E)\neq E$ and let us show that $E$ is projective. Since $\Rad(E) \neq E$, then there exists a maximal submodule $K$ of $E$ and the corresponding simple factor module $E/K$. Since $E/K$ is simple, then $E/K \cong R/I$ for some maximal right ideal $I$ of $R$. Thus there exists a nonzero homomorphism $f: E \to R/I$. Now, consider the following diagram.

\begin{equation*}
\begin{tikzcd}
  \quad      & E \arrow{d}{f} &   \\
R  \arrow{r}{\pi} & R/I   \arrow{r}  &  0
\end{tikzcd}
\end{equation*}

Since $E$ is max-projective by assumption, there is  a nonzero homomorphism $g:E \to R$ such that $f=\pi g$. By the First Isomorphism Theorem, $E/\Ker(g) \cong Im(g) \subseteq R$, and by the fact that $R_R$ is nonsingular, $E/\Ker(g)$ is also nonsingular. Thus, $\Ker(g)$ is closed in $E$ by \cite[Lemma 2.3]{sandomierski-closed}. As the closed submodules of injective modules are direct summands, and $\Ker(g)$ is a closed submodule of $E$, we have $E \cong \Ker(g) \oplus E'$ for some submodule $E'$ of $E$. Now, since $E$ is indecomposable,  $\Ker(g)=0$ or $E'=0$. If $E'=0$, then $\Ker(g)=E$ so that $g=0$, a contradiction. Thus $\Ker(g)=0$ and $E=E'$, means that $g$ is a monomorphism. Since $g$ is a monomorphism, $g(E) \cong E$ is injective.  So that $R = g(E) \oplus J$ for some right ideal $J$ of $R$. Now, since $R$ is projective and $g(E)$ is direct summand of $R$,  $g(E) \cong E$ is cyclic and projective. This proves the necessity.
\end{proof}

In \cite{matlis}, Matlis proved that a ring $R$ is right Noetherian if and only if every injective right module $M$ can be written as a direct sum of indecomposable (injective) submodules. Using this crucial result of Matlis, when we replace the right hereditary assumption from the Theorem \ref{hereditary-noetherian} with the right nonsingularity, we have the following results that characterizes the $R$-projective modules and the almost-$QF$ rings.

\begin{lemma}\label{noethnonsingularinjectives}

  Let R be a right Noetherian and right nonsingular ring. Then the following statements are equivalent for an injective right module $E$.

  \begin{enumerate}

    \item[(1)] E is $R$-projective

    \item[(2)] $E$ is max-projective

    \item[(3)] $E= E_{1} \oplus E_{2}$ where $\Rad(E_1)=E_1$ and $E_2$ is projective.

  \end{enumerate}

\end{lemma}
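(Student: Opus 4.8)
The plan is to establish the cycle of implications $(1)\Rightarrow(2)\Rightarrow(3)\Rightarrow(1)$. The implication $(1)\Rightarrow(2)$ is immediate, since every maximal right ideal is in particular a right ideal, so $R$-projectivity formally entails max-projectivity.

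For $(2)\Rightarrow(3)$, I would first invoke Matlis' theorem (\cite{matlis}): as $R$ is right Noetherian, the injective module $E$ decomposes as $E=\bigoplus_{j} U_j$ with each $U_j$ indecomposable injective, and a short diagram chase shows that a direct summand of a max-projective module is again max-projective, so each $U_j$ is max-projective. The crucial structural point is that, over a right nonsingular ring, each indecomposable injective $U_j$ is either singular or nonsingular; I would prove this by showing $Z(U_j)$ is closed in $U_j$. Indeed, if $Z(U_j)$ were essential in a submodule $K$, then $K/Z(U_j)$ is singular (an essential quotient is always singular), and since over a right nonsingular ring the class of singular modules is closed under extensions, $K$ would be singular, forcing $K\subseteq Z(U_j)$; hence $Z(U_j)$ is closed and therefore a direct summand of the injective module $U_j$, as recalled before Lemma \ref{indecomposablenonsingularinjective}. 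Indecomposability then gives $Z(U_j)\in\{0,U_j\}$. Applying Lemma \ref{SingularMaxprojective} in the singular case and Lemma \ref{indecomposablenonsingularinjective} in the nonsingular case, every $U_j$ either satisfies $\Rad(U_j)=U_j$ or is projective and cyclic. Setting $E_1$ to be the sum of those $U_j$ with $\Rad(U_j)=U_j$ and $E_2$ the sum of the remaining, projective ones, we obtain $E=E_1\oplus E_2$ with $E_2$ projective and $\Rad(E_1)=\bigoplus_j \Rad(U_j)=E_1$, since the radical commutes with direct sums.

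For $(3)\Rightarrow(1)$, I would argue that each summand is $R$-projective and that $R$-projectivity passes to arbitrary direct sums (a lift can be assembled componentwise via the universal property of the direct sum). The projective summand $E_2$ is clearly $R$-projective. For $E_1$, the right Noetherian hypothesis is decisive: given any right ideal $J$ and any $f\colon E_1\to R/J$, the image $f(E_1)$ is a finitely generated submodule of the Noetherian module $R/J$, and functoriality of the radical together with $\Rad(E_1)=E_1$ forces $\Rad(f(E_1))=f(E_1)$. Since a nonzero finitely generated module has a maximal submodule, we conclude $f(E_1)=0$. Thus $\Hom(E_1,R/J)=0$ for every right ideal $J$, so $E_1$ is $R$-projective vacuously, and assembling the pieces yields that $E$ is $R$-projective.

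I expect the main obstacle to be the claim that each indecomposable injective is singular or nonsingular, i.e.\ that $Z(U_j)$ splits off as a direct summand. This is precisely where right nonsingularity enters in an essential way, through the closure of the singular class under extensions, and it is what allows the dichotomy of Lemmas \ref{SingularMaxprojective} and \ref{indecomposablenonsingularinjective} to be applied uniformly to every indecomposable summand.
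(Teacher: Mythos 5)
Your proof is correct and follows essentially the same route as the paper's: Matlis decomposition into indecomposable injective summands, the projective-or-radical dichotomy for each summand, and the vanishing of $\Hom(E_1,R/J)$ via the Noetherian hypothesis for $(3)\Rightarrow(1)$. You are in fact more careful than the paper at one point: the paper applies Lemma~\ref{indecomposablenonsingularinjective} to every indecomposable summand without first noting that each such summand is either singular or nonsingular (your closedness argument for $Z(U_j)$) and without invoking Lemma~\ref{SingularMaxprojective} for the singular ones, a small gap that your argument fills.
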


\begin{proof}

  $(1) \Rightarrow (2)$ is clear.

 $(2) \Rightarrow (3)$ Since $R$ is right Noetherian, every injective right $R$-module is a direct sum of indecomposable injective right $R$-modules. Thus $E= { \oplus}_{ i \in I} E_i $ where $I$ is an index set and $E_i$ is indecomposable for each $i \in I$. As $E$ is max-projective and $R$ is right nonsingular, $E_i$ is projective or $\Rad(E_i)= E_i$ for each $i \in I$ by Proposition \ref{indecomposablenonsingularinjective}. Let $ J= \{ i \in I \mid \Rad(E_i)= E_i \}$.  Then for $E_1= \oplus _{i \in J}E_i$ and $E_2=\oplus _{ i\in I \backslash J}E_i$, we have $\Rad(E_1)= E_1$ and $E_2$ is projective. Thus (3) follows.

  $(3) \Rightarrow (1)$ Let $Q$ be an injective right R-module. Then, by (3), $Q = Q_1 \oplus Q_2$ where $\Rad(Q_1)= Q_1$ and $Q_2$ is projective. Since $R$ is right Noetherian, $R/I$ is Noetherian for each right ideal $I$ of $R$. Thus, $\Hom( Q_1, R/I )=0$ for each right ideal $I$ of $R$. So that $Q_1$ is $R$-projective. On the other hand, $Q_2$ is also $R$-projective by projectivity of itself. Hence $Q = Q_1 \oplus Q_2$ is $R$-projective as a direct sum of $R$-projective modules.
\end{proof}

\begin{proposition} \label{noethnonsingularMaxQF}

  Let $R$ be a right Noetherian and right nonsingular ring. Then the following statements are equivalent.

  \begin{enumerate}

    \item[(1)] $R$ is right almost-$QF$.

    \item[(2)] $R$ is right max-$QF$.

    \item[(3)] Every injective right module $E$ has a decomposition $E= E_{1} \oplus E_{2}$ where $\Rad(E_1)=E_1$ and $E_{2}$ is projective.

  \end{enumerate}

\end{proposition}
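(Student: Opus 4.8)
The plan is to deduce this ring-level statement directly from the module-level Lemma \ref{noethnonsingularinjectives}, since each of the three conditions here is simply the assertion that the corresponding condition of that lemma holds for \emph{every} injective right $R$-module. I would set up a cyclic implication $(1)\Rightarrow(2)\Rightarrow(3)\Rightarrow(1)$ and, at each arrow, first unwind the relevant definition and then quote the appropriate implication of Lemma \ref{noethnonsingularinjectives} applied to an arbitrary injective right module $E$.

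Concretely, for $(1)\Rightarrow(2)$ I would observe that being right almost-$QF$ means precisely that every injective right module is $R$-projective; applying part $(1)\Rightarrow(2)$ of Lemma \ref{noethnonsingularinjectives} to each such module shows it is max-projective, so $R$ is right max-$QF$. For $(2)\Rightarrow(3)$, right max-$QF$ says every injective $E$ is max-projective, and $(2)\Rightarrow(3)$ of the lemma then supplies the decomposition $E=E_1\oplus E_2$ with $\Rad(E_1)=E_1$ and $E_2$ projective; since $E$ was arbitrary this is exactly condition $(3)$. Finally, for $(3)\Rightarrow(1)$, condition $(3)$ hands us such a decomposition for every injective right module, and $(3)\Rightarrow(1)$ of the lemma upgrades each to $R$-projectivity, giving that $R$ is right almost-$QF$.

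The only point requiring care --- and it is the one place where the quantifiers must be tracked --- is that Lemma \ref{noethnonsingularinjectives} is stated for a \emph{fixed} injective module, whereas the three conditions of the proposition are statements about \emph{all} injective right modules simultaneously. Since the lemma's equivalences hold uniformly for each injective right $R$-module (the ring hypotheses of right Noetherianness and right nonsingularity being exactly those of the lemma), universally quantifying over $E$ turns the per-module equivalence into the desired ring-theoretic equivalence. I do not anticipate any genuine obstacle here: all the substantive work --- the use of Matlis's decomposition of injectives over a right Noetherian ring and the dichotomy for indecomposable nonsingular injectives coming from Lemma \ref{indecomposablenonsingularinjective} --- has already been absorbed into Lemma \ref{noethnonsingularinjectives}, so this proposition is essentially its ``for all $E$'' restatement.
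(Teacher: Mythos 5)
Your proposal is correct and follows essentially the same route as the paper: the paper also notes $(1)\Rightarrow(2)$ is immediate, cites Lemma \ref{noethnonsingularinjectives} for $(2)\Rightarrow(3)$, and for $(3)\Rightarrow(1)$ merely repeats inline the same argument that proves $(3)\Rightarrow(1)$ of that lemma (projective modules are $R$-projective, radical modules are $R$-projective over a right Noetherian ring, and direct sums of $R$-projective modules are $R$-projective). Your observation that the proposition is just the universal quantification of the lemma over all injective modules is exactly the right way to see it, and there is no gap.
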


\begin{proof}$(1) \Rightarrow (2)$ is clear. $(2) \Rightarrow (3)$ By Lemma \ref{noethnonsingularinjectives}.

$(3) \Rightarrow (1)$ Every projective module is $R$-projective,  and each module $N$ with $\Rad(N)=N$ is $R$-projective over a right Noetherian ring. Hence $(3)$ implies $(1)$.
\end{proof}

A right $R$-module $M$ is said to be \emph{finite dimensional} provided that $M$ contains no infinite independent families of nonzero submodules. For example, all Noetherian modules are finite dimensional. A ring $R$ is said to be \emph{finite dimensional} if the right $R$-module $R_{R}$ is finite dimensional (\cite{goodearl}). Now, from the Theorem \ref{hereditary-noetherian}, if we replace the right hereditary right Noetherian assumption on the ring with the right finite-dimensional right nonsingular, we have the following crucial characterization of max-$QF$ rings.

\begin{theorem}\label{prop:findimMaxQF}

Let $R$  be a right finite-dimensional and right nonsingular ring. Then the following are equivalent.

\begin{enumerate}

  \item[(1)] $R$ is right max-$QF$.

  \item[(2)] $E(R_R)$ is max-projective and $\Rad(E)=E$ for every singular injective right $R$-module $E$.

  \item[(3)] Every injective right $R$-module $E$ has a decomposition $E= E_{1} \oplus E_{2}$ where $\Rad(E_1)=E_1$ and $E_{2}$ is projective.

  \item[(4)] Every nonsingular injective right $R$-module is max-projective and $Rad(E)=E$ for every singular injective right $R$-module $E$.

\end{enumerate}

\end{theorem}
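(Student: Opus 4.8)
The plan is to prove the cycle $(1)\Rightarrow(4)\Rightarrow(2)\Rightarrow(3)\Rightarrow(1)$, with almost all of the content concentrated in $(2)\Rightarrow(3)$. Three of these implications are soft. For $(1)\Rightarrow(4)$: under $(1)$ every injective module, in particular every nonsingular injective one, is max-projective, and every singular injective $E$ is max-projective, hence satisfies $\Rad(E)=E$ by Lemma \ref{SingularMaxprojective}. For $(4)\Rightarrow(2)$: since $R_R$ is nonsingular, $E(R_R)$ is a nonsingular injective module, so it is max-projective by $(4)$, and the singular condition is common to both statements. For $(3)\Rightarrow(1)$: given an injective $E=E_1\oplus E_2$ as in $(3)$ and a map $f\colon E\to R/I$ with $I$ maximal, the restriction $f|_{E_1}$ is zero because $\Rad(E_1)=E_1$ precludes any nonzero map to a simple module, while $f|_{E_2}$ lifts along $\pi\colon R\to R/I$ by projectivity of $E_2$; extending this lift by zero on $E_1$ lifts $f$, so $E$ is max-projective and $R$ is right max-$QF$.

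For $(2)\Rightarrow(3)$ I would first split an arbitrary injective right module $E$ using right nonsingularity. Over a right nonsingular ring the singular modules form the torsion class of a hereditary torsion theory with torsion submodule $Z(-)$; since the theory is hereditary, $Z(E)$ is injective and splits off, giving $E=Z(E)\oplus E_n$ with $Z(E)$ singular injective and $E_n\cong E/Z(E)$ nonsingular injective. The summand $Z(E)$ already satisfies $\Rad(Z(E))=Z(E)$ by the hypothesis in $(2)$, so the task reduces to decomposing the nonsingular injective module $E_n$ as a direct sum of a module equal to its radical and a projective module.

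Here right finite-dimensionality enters through the maximal right quotient ring $Q=Q^{r}_{\max}(R)$. For a right nonsingular ring, $Q$ agrees with $E(R_R)$ as a ring and is von Neumann regular and right self-injective; finiteness of the right uniform dimension then forces $Q$ to be semisimple Artinian (see \cite{goodearl}). The nonsingular injective right $R$-modules are exactly the right $Q$-modules, so $E_n$ is a semisimple $Q$-module and hence $E_n=\bigoplus_{i}F_i$ with each $F_i$ an indecomposable nonsingular injective module isomorphic to a direct summand of $E(R_R)$. As $E(R_R)$ is max-projective by $(2)$ and a direct summand of a max-projective module is again max-projective (extend a given map by zero, lift, and restrict), each $F_i$ is max-projective, whence Lemma \ref{indecomposablenonsingularinjective} gives that each $F_i$ either equals its radical or is projective and cyclic. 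Grouping the $F_i$ accordingly and recombining the summands equal to their radical with $Z(E)$ produces $E=E_1\oplus E_2$ with $\Rad(E_1)=E_1$ (using that $\Rad$ commutes with direct sums) and $E_2$ projective, which is $(3)$.

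The step I expect to be the main obstacle is exactly the bridge from ``$E(R_R)$ is max-projective'' to ``every nonsingular injective module is a direct sum of max-projective indecomposables isomorphic to summands of $E(R_R)$''. This is where right finite-dimensionality is indispensable: it is what makes $Q$ semisimple Artinian and thereby forces every nonsingular injective module to decompose into copies of the indecomposable summands of $E(R_R)$, so that Lemma \ref{indecomposablenonsingularinjective} can be applied summand by summand. The supporting facts---injectivity of $Z(E)$ for the hereditary Goldie torsion theory, the identification of nonsingular injective $R$-modules with $Q$-modules, and the stability of max-projectivity under summands and of $\Rad$ under direct sums---are standard and routine to verify.
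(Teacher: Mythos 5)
Your proof is correct and follows essentially the same route as the paper: split off $Z(E)$ as an injective direct summand, use right finite-dimensionality plus nonsingularity (via the semisimple maximal right quotient ring) to write the nonsingular part as a direct sum of indecomposable injectives, and apply Lemma \ref{indecomposablenonsingularinjective} to each piece. The only differences are organizational (you close one cycle $(1)\Rightarrow(4)\Rightarrow(2)\Rightarrow(3)\Rightarrow(1)$ where the paper handles $(4)$ separately), and you make explicit a step the paper leaves implicit in $(2)\Rightarrow(3)$, namely that each indecomposable summand is max-projective because it occurs as a direct summand of $E(R_R)$.
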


\begin{proof} $(1) \Rightarrow (2)$ and $(3) \Rightarrow (1)$ are clear.

$(2) \Rightarrow (3)$  Let $E$ be an injective right $R$-module. Since $R$ is right nonsingular, $E=Z(E)\oplus K$ for some submodule $K$ of $E$. Note that $K$ is right nonsingular injective right $R$-module, and $\Rad(Z(E))=Z(E)$ by Lemma  \ref{SingularMaxprojective}. Then $K$ is a direct sum of indecomposable injective modules by \cite[Example 3B-5]{goodearl}, that is $K=\oplus_{i \in I}K_i$, where each $K_i$ is indecomposable and injective. Then for each $i \in I$, $K_i$ is projective or $\Rad(K_i)=K_i$ by Lemma \ref{indecomposablenonsingularinjective}. Then $K$ can be expressed as $K=K_1 \oplus K_2$, where $\Rad(K_1)=K_1$ and $K_2$ is projective. For $E_1=Z(E)\oplus K_1$, and $E_2=K_2$, $E$ has the desired decomposition in $(3)$.

$(1) \Rightarrow (4)$ Clearly $(1)$ implies that nonsingular injective right $R$-modules are max-projective. By Lemma \ref{SingularMaxprojective}, we have $\Rad(E)=E$ for every singular right $R$-module.

$(4) \Rightarrow (1)$ Since $R$ is right nonsingular, every injective right module $Q$ can be written as $Q=Z\oplus N$, where $Z$ is the singular submodule of $Q$ and $N$ is nonsingular. By $(4)$, $\Rad (Z)=Z$, hence it is max-projective. As $N$ is nonsingular, $N$ is max-projective again by $(4)$. Hence $Q$ is max-projective, and so $R$ is right max-QF.
\end{proof}

\begin{lemma}\label{lem:indnonembedsinE(R)} Let $R$ be a right nonsingular ring and $Q$ be an indecomposable nonsingular injective right $R$-module. Then $Q$ embeds in $E(R_R)$.

\end{lemma}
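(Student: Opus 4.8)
The plan is to identify $Q$ with the injective hull of a suitable right ideal of $R$, and then to realize that injective hull as a direct summand of $E(R_R)$. First I would fix a nonzero element $q \in Q$. Since $Q$ is indecomposable injective it is uniform, so every nonzero submodule is essential; in particular $Q = E(qR)$, the injective hull of the cyclic submodule $qR \cong R/\operatorname{ann}(q)$, where $\operatorname{ann}(q) = \{r \in R : qr = 0\}$. Because $Q$ is nonsingular and $q \neq 0$, the element $q$ cannot lie in $Z(Q)=0$, and hence $\operatorname{ann}(q)$ is not essential in $R_R$.

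Next I would choose, by Zorn's Lemma, a complement $C$ of $\operatorname{ann}(q)$ in $R_R$, that is, a right ideal maximal with respect to $C \cap \operatorname{ann}(q)=0$; then $\operatorname{ann}(q)\oplus C$ is essential in $R_R$, and $C\neq 0$ since $\operatorname{ann}(q)$ is not essential. The multiplication map $\mu\colon R\to qR$, $r\mapsto qr$, has kernel $\operatorname{ann}(q)$, so its restriction to $C$ is injective and yields $C\cong qC$. Moreover, since $\operatorname{ann}(q)\oplus C$ is essential in $R$ and contains $\Ker\mu=\operatorname{ann}(q)$, its image $qC$ is essential in the quotient $qR$. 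Consequently $Q = E(qR)=E(qC)\cong E(C)$; that is, $Q$ is isomorphic to the injective hull of the right ideal $C$.

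Finally I would use the containment $C\subseteq R_R\subseteq E(R_R)$: inside the injective module $E(R_R)$ pick a maximal essential extension $D$ of $C$. Then $D$ is an injective hull of $C$, so $D\cong E(C)\cong Q$, and being an injective (equivalently, closed) submodule of the injective module $E(R_R)$, the submodule $D$ is a direct summand of $E(R_R)$. Composing the isomorphism $Q\cong D$ with the inclusion $D\hookrightarrow E(R_R)$ then gives the desired embedding $Q\hookrightarrow E(R_R)$.

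The argument is essentially a chain of standard facts, so the only delicate points are bookkeeping rather than conceptual. One must confirm that $qC$ is essential in $qR$, which follows from pushing the essential submodule $\operatorname{ann}(q)\oplus C$ through the surjection $\mu$ whose kernel it contains, and one must confirm that $C$ is nonzero, which is precisely where the nonsingularity of $Q$ is used, via the non-essentiality of $\operatorname{ann}(q)$. I expect no genuine obstacle beyond carefully tracking these essentiality claims, together with the earlier-noted fact that closed submodules of injective modules split off as direct summands.
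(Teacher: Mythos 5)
Your argument follows essentially the same route as the paper's own proof: both identify $Q$ with the injective hull of a right ideal complementary to the annihilator of a nonzero element of $Q$, and then realize that hull inside $E(R_R)$. There is, however, one step whose justification as written is not valid. You assert that $qC$ is essential in $qR$ ``because $\operatorname{ann}(q)\oplus C$ is essential in $R$ and contains $\Ker\mu$.'' The general principle ``if $A$ is essential in $M$ and $N\subseteq A$, then $A/N$ is essential in $M/N$'' is false: take $M=\Z\oplus\Z$, $A=\Z\oplus 2\Z$ (essential in $M$) and $N=0\oplus 2\Z\subseteq A$; then $A/N\cong\Z\oplus 0$ is not essential in $M/N\cong\Z\oplus(\Z/2\Z)$. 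What rescues your step is a fact you did not invoke: $\operatorname{ann}(q)$ is a \emph{closed} right ideal, because $R$ is right nonsingular and $R/\operatorname{ann}(q)\cong qR\subseteq Q$ is nonsingular (this is \cite[Lemma 2.3]{sandomierski-closed}, already used elsewhere in the paper). A closed submodule is a complement of any complement of itself, so $\operatorname{ann}(q)$ is a complement of $C$, and the standard lemma that $(N\oplus C)/N$ is essential in $M/N$ whenever $N$ is a complement of $C$ then yields the essentiality of $qC$ in $qR$. This is exactly the role played by the closedness of $I$ in the paper's proof; you used the nonsingularity of $Q$ only to get $C\neq 0$, but it is needed a second time here. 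With that repair your proof is complete and coincides with the paper's.
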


\begin{proof} Let $0 \neq x \in Q$. Then $xR \cong R/I$ for some closed right ideal $I$ of $R$. Let $J$ be a complement of $I$ in $R$. Then $J \cong (J \oplus I)/I$ is essential in $R/I$. Thus $xR$ contains an essential submodule isomorphic to $J$, say $K$.  Since $Q$ is indecomposable and injective, it is uniform, whence $K$ is essential in $Q$. Therefore, $Q=E(K)\cong E(J) \subseteq E(R_R)$ which means that $Q$ embeds in $E(R_R)$. This completes the proof.
\end{proof}

Let $R$ be any ring and $M$ be an $R$-module. A submodule $N$ of $M$ is called radical submodule  if $N$ has no maximal submodules, i.e. $N = \Rad(N)$. By $P(M)$ we denote the sum of all radical submodules of a module $M$. Then $P(M)$ is the largest radical submodule of $M$, and so $\Rad(P(M))=P(M)$. Moreover, $P$ is an idempotent radical with $P(M)\subseteq \Rad(M)$ and $P(M/P(M))=0$, (see \cite{radsupp}).

\begin{lemma}\label{lem:P(E)=0} If $R$ is a right nonsingular and right max-QF ring with $P(E(R_R))=0$, then every indecomposable nonsingular injective right $R$-module is projective.

\end{lemma}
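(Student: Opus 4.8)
The plan is to combine the dichotomy of Lemma~\ref{indecomposablenonsingularinjective} with the embedding of Lemma~\ref{lem:indnonembedsinE(R)}, and to use the hypothesis $P(E(R_R))=0$ purely to eliminate the ``radical'' alternative. First I would fix an arbitrary indecomposable nonsingular injective right $R$-module $Q$. Since $R$ is right max-$QF$, every injective right $R$-module is max-projective; in particular $Q$ is max-projective. Applying Lemma~\ref{indecomposablenonsingularinjective} to the nonsingular indecomposable injective module $Q$ then yields the dichotomy that either $\Rad(Q)=Q$ or $Q$ is projective and cyclic. The entire task reduces to ruling out the first possibility.

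Next I would argue by contradiction: suppose $\Rad(Q)=Q$, so that $Q$ is a nonzero radical module. By Lemma~\ref{lem:indnonembedsinE(R)}, the indecomposable nonsingular injective module $Q$ embeds in $E(R_R)$; write $Q'\subseteq E(R_R)$ for the image of such an embedding, so that $Q'\cong Q$. Because the condition $\Rad(N)=N$ (equivalently, having no maximal submodules) is preserved by module isomorphism, we get $\Rad(Q')=Q'$, i.e.\ $Q'$ is a radical submodule of $E(R_R)$. Since $P(E(R_R))$ is by definition the sum of all radical submodules of $E(R_R)$, we conclude $Q'\subseteq P(E(R_R))=0$, whence $Q'=0$ and therefore $Q=0$. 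This contradicts the fact that an indecomposable injective module is nonzero.

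Having excluded the case $\Rad(Q)=Q$, the only surviving branch of the dichotomy is that $Q$ is projective (and cyclic), which is precisely the assertion. I do not expect a genuine obstacle, as the two cited lemmas already carry the analytic weight; the single point deserving care is the transport of the radical property $\Rad(N)=N$ along the isomorphism $Q\cong Q'$, since it is exactly this that places the copy $Q'$ inside $P(E(R_R))$ and thereby lets the hypothesis $P(E(R_R))=0$ force $Q'=0$. In short, the role of $P(E(R_R))=0$ is to guarantee that $E(R_R)$ contains no nonzero radical submodule, so no indecomposable nonsingular injective summand can be of radical type.
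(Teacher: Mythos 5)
Your proposal is correct and follows essentially the same route as the paper's own proof: invoke max-projectivity of $Q$, apply the dichotomy of Lemma~\ref{indecomposablenonsingularinjective}, and use the embedding of Lemma~\ref{lem:indnonembedsinE(R)} together with $P(E(R_R))=0$ to exclude the case $\Rad(Q)=Q$. Your write-up merely makes explicit the step the paper leaves implicit, namely that the isomorphic image of a radical module is a radical submodule of $E(R_R)$ and hence lies in $P(E(R_R))$.
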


\begin{proof} Let $K$ be an indecomposable nonsingular injective right $R$-module. Then $K$ is max-projective by the hypothesis, and so $K$ is projective or $\Rad(K)=K$ by Lemma \ref{indecomposablenonsingularinjective}. On the other hand, $K$ embeds in $E(R_R)$ by Lemma \ref{lem:indnonembedsinE(R)}. As the hypothesis says that $P(E(R_R))=0$, $\Rad(K)=K$ is not possible. Therefore $K$ is projective.
\end{proof}

We obtain the following corollary by Theorem \ref{prop:findimMaxQF} and Lemma \ref{lem:P(E)=0}.

\begin{corollary} Let $R$ be a right finite dimensional and right nonsingular ring with $P(E(R_R))=0$. The following  are equivalent.

\begin{enumerate}

  \item[(1)] $R$ is right max-$QF$.

  \item[(2)] $E(R_R)$ is projective and $\Rad(E)=E$ for every singular injective right $R$-module $E$.

  \item[(3)] Every injective right $R$-module $E$ has a decomposition $E= E_{1} \oplus E_{2}$ where $\Rad(E_1)=E_1$ and $E_{2}$ is projective.

  \item[(4)] Every nonsingular injective right $R$-module is projective and $\Rad(E)=E$ for every singular injective right $R$-module $E$.

\end{enumerate}

\end{corollary}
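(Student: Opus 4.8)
The plan is to deduce the corollary directly from Theorem \ref{prop:findimMaxQF} together with Lemma \ref{lem:P(E)=0}, exploiting the fact that the only difference between the two statements is that conditions $(2)$ and $(4)$ here demand \emph{projectivity} of certain injectives where Theorem \ref{prop:findimMaxQF} only demanded \emph{max-projectivity}. Since every projective module is max-projective, each of $(2)$, $(3)$, $(4)$ of this corollary immediately implies the corresponding condition of Theorem \ref{prop:findimMaxQF} (note that $(3)$ is literally the same statement in both). Hence the implications $(2)\Rightarrow(1)$, $(3)\Rightarrow(1)$ and $(4)\Rightarrow(1)$ come for free from Theorem \ref{prop:findimMaxQF}, and likewise $(1)\Rightarrow(3)$ is nothing but $(1)\Rightarrow(3)$ of that theorem. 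Thus the whole content to be supplied is the strengthening of the conclusions $(2)$ and $(4)$ under the extra hypothesis $P(E(R_R))=0$.

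First I would handle $(1)\Rightarrow(2)$. Assuming $R$ is right max-$QF$, the hypotheses of Lemma \ref{lem:P(E)=0} are met (right nonsingular, right max-$QF$, $P(E(R_R))=0$), so every indecomposable nonsingular injective right $R$-module is projective. Because $R$ is right nonsingular, $E(R_R)$ is a nonsingular injective module, and because $R$ is right finite-dimensional it is a \emph{finite} direct sum of indecomposable injective submodules; each summand is then nonsingular indecomposable injective, hence projective, so $E(R_R)$ is projective. The remaining assertion of $(2)$, namely $\Rad(E)=E$ for every singular injective $E$, follows from Lemma \ref{SingularMaxprojective}: being max-$QF$, each such $E$ is max-projective, and for singular injective modules max-projectivity is equivalent to $\Rad(E)=E$.

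For $(1)\Rightarrow(4)$ I would argue the same way, but now for an arbitrary nonsingular injective $N$ rather than for $E(R_R)$. Here $N$ need not be finite-dimensional, so instead of finite-dimensionality of $R_R$ I would invoke the decomposition of nonsingular injectives into indecomposable injectives over a right finite-dimensional right nonsingular ring (\cite[Example 3B-5]{goodearl}), exactly as in the proof of Theorem \ref{prop:findimMaxQF}; each indecomposable summand of $N$ is projective by Lemma \ref{lem:P(E)=0}, and an arbitrary direct sum of projectives is projective, so $N$ is projective. Together with $\Rad(E)=E$ for singular injectives (again Lemma \ref{SingularMaxprojective}), this yields $(4)$.

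The genuinely load-bearing step, and the only place where more than a formal manipulation is needed, is passing from max-projectivity to projectivity, which is precisely what Lemma \ref{lem:P(E)=0} provides once one knows the injective in question splits as a (finite or arbitrary) direct sum of indecomposable nonsingular injectives. I expect the main subtlety to be the $(1)\Rightarrow(4)$ case, where the decomposition into indecomposables rests on the right finite-dimensional right nonsingular hypothesis via \cite[Example 3B-5]{goodearl} rather than on a finite uniform-dimension argument, and where one must note that projectivity is preserved under arbitrary direct sums. Collecting the implications $(1)\Rightarrow(2),(3),(4)$ with the free reverse implications $(2),(3),(4)\Rightarrow(1)$ then closes the equivalence.
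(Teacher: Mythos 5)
Your proposal is correct and follows exactly the route the paper intends: the paper gives no written proof, simply asserting that the corollary follows from Theorem \ref{prop:findimMaxQF} together with Lemma \ref{lem:P(E)=0}, which is precisely how you argue (using Lemma \ref{SingularMaxprojective} and the decomposition of nonsingular injectives into indecomposables to upgrade max-projectivity to projectivity in $(2)$ and $(4)$).
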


\section{local and semilocal max-$QF$ rings}

In this section, we give some characterizations of max-$QF$ rings over local rings and semilocal right semihereditary rings.
In particular, we will characterize semilocal right semihereditary  rings whose dual Goldie torsion theory is splitting. V.S. Ramamurthi defined a dual Goldie torsion theory and studied some of its properties in \cite{ramamurthi}. Özcan and Harmancı proved that over a $QF$ ring the dual Goldie torsion theory splits. They raised the question, "Is a ring $R$ whose dual Goldie torsion theory is splitting a $QF$ ring?" (see, \cite{ozcan}). Later, in \cite{lomp2}, it is given by Lomp a list of classes of rings whose dual Goldie torsion theory is splitting, but that are far from being $QF$. Now, over a semilocal right semihereditary ring, we give a complete characterization of right max-$QF$ rings in terms of splitting of the dual Goldie torsion theory. This characterization will provide another negative example to question mentioned above.
We recall the following  lemma, which states that finite direct product of right max-$QF$ rings is also right max-$QF$.
\begin{lemma} \cite[Lemma 4]{maxproj} \label{product}
Let $R_{1}$ and $R_{2}$ be rings. Then $R=R_{1}\times R_{2}$ is right max-$QF$ if and only if $R_{1}$ and $R_{2}$ are both right max-$QF$.
\end{lemma}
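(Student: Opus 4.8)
The plan is to reduce both injectivity and max-projectivity over the product $R = R_1 \times R_2$ to the corresponding properties of the two component modules over $R_1$ and $R_2$ separately. Write $e_1 = (1,0)$ and $e_2 = (0,1)$ for the central orthogonal idempotents of $R$ with $e_1 + e_2 = 1$. Every right $R$-module $M$ splits canonically as $M = Me_1 \oplus Me_2$, where $M_i := Me_i$ carries the structure of a right $R_i$-module (it is annihilated by $e_j$ for $j \neq i$), and every $R$-homomorphism respects this decomposition because $f(Me_i) = f(M)e_i \subseteq Ne_i$. Hence $\Hom_R(M,N) = \Hom_{R_1}(M_1,N_1) \oplus \Hom_{R_2}(M_2,N_2)$ for right $R$-modules $M,N$, and in particular $M$ is injective over $R$ if and only if $M_1$ and $M_2$ are injective over $R_1$ and $R_2$ respectively.

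First I would classify the maximal right ideals of $R$. Since any simple right $R$-module $S$ satisfies $S = Se_1$ or $S = Se_2$, exactly one summand surviving by simplicity, each maximal right ideal of $R$ is either of the form $\mathfrak{m}_1 \times R_2$ with $\mathfrak{m}_1$ maximal in $R_1$, or $R_1 \times \mathfrak{m}_2$ with $\mathfrak{m}_2$ maximal in $R_2$. Consequently $R/(\mathfrak{m}_1 \times R_2) \cong R_1/\mathfrak{m}_1$ is concentrated on the first component, and the canonical projection $\pi \colon R \to R/(\mathfrak{m}_1 \times R_2)$ restricts to the projection $R_1 \to R_1/\mathfrak{m}_1$ on $Re_1 = R_1$ and to the zero map on $Re_2 = R_2$; the symmetric statement holds for maximal ideals of the second type.

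Next I would establish the key component criterion: a right $R$-module $M$ is max-projective over $R$ if and only if $M_1$ is max-projective over $R_1$ and $M_2$ is max-projective over $R_2$. Given a maximal right ideal $\mathfrak{m}_1 \times R_2$ and a homomorphism $f \colon M \to R/(\mathfrak{m}_1 \times R_2)$, the map $f$ annihilates $M_2$ because its image lies on the first component, so $f$ is determined by an $R_1$-homomorphism $f_1 \colon M_1 \to R_1/\mathfrak{m}_1$. Since any lift $g \colon M \to R$ likewise respects the idempotent decomposition, the equation $\pi g = f$ is automatic on $M_2$ and amounts exactly to factoring $f_1$ through $R_1 \to R_1/\mathfrak{m}_1$. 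Thus the lifting condition for maximal ideals of the first type is precisely max-projectivity of $M_1$ over $R_1$, and symmetrically for the second type; running over all maximal right ideals of $R$ yields the claimed equivalence.

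With these two facts in hand the lemma follows quickly. If $R_1$ and $R_2$ are right max-$QF$ and $E$ is an injective right $R$-module, then $E_1$ and $E_2$ are injective over $R_1$ and $R_2$, hence max-projective there, so $E$ is max-projective over $R$ by the component criterion; thus $R$ is right max-$QF$. Conversely, if $R$ is right max-$QF$ and $E_1$ is an injective right $R_1$-module, I would regard $E_1$ as a right $R$-module with trivial second component; it is then injective over $R$, hence max-projective over $R$, and its first component is max-projective over $R_1$ by the component criterion, so $R_1$ is right max-$QF$, and likewise $R_2$. The only real work is the component criterion for max-projectivity; once the maximal right ideals of $R$ are identified and the idempotent decomposition of homomorphisms is invoked, the lifting problem separates cleanly into the two factors, so I expect no serious obstacle.
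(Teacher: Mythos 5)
Your proposal is correct: the decomposition of modules, homomorphisms, and maximal right ideals along the central idempotents $e_1=(1,0)$ and $e_2=(0,1)$ does reduce both injectivity and the lifting condition to the two factors, and the component criterion you formulate is exactly what makes the equivalence work in both directions. The paper itself does not prove this lemma but only cites it from \cite{maxproj}*{Lemma 4}; your argument is the standard one for product rings and can stand as a complete proof.
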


Recall that $R$ is said to be right semihereditary if all finitely generated right ideals are projective. A result of Megibben, see \cite[Theorem 2]{megibben}, states that a ring $R$ is right semihereditary if and only if every quotient module of an $FP$-injective right module is $FP$-injective. Now we are ready to characterize right max-$QF$ rings whose dual Goldie torsion theory is splitting.

\begin{theorem}\label{prop:semilocalrightsemihereditary} Let $R$ be a semilocal right semihereditary ring. Then the following statements are equivalent.

\begin{enumerate}

  \item[(1)] $R$ is right max-$QF$.

  \item[(2)] $R = S \times T$, where $S$ is semisimple Artinian and $T$ is right small ring.

  \item[(3)] Every simple injective right module is projective.

  \item[(4)] Every singular injective right module is max-projective.

  \item[(5)] The dual Goldie torsion theory splits.

\end{enumerate}

\end{theorem}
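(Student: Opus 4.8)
The plan is to run a single cyclic argument $(1)\Rightarrow(4)\Rightarrow(3)\Rightarrow(2)\Rightarrow(1)$ for the first four conditions and then to splice in $(5)$ through $(2)\Rightarrow(5)\Rightarrow(4)$. Two preliminary observations are used throughout. First, a right semihereditary ring is right nonsingular, so Lemmas \ref{SingularMaxprojective} and \ref{indecomposablenonsingularinjective} apply. Second, every simple right module is either singular or projective: if $V\cong R/I$ with $I$ maximal, then either $I$ is essential, so $V$ is singular, or $I$ is not essential and, being maximal, is a direct summand, so that $V$ is projective. With these in hand, $(1)\Rightarrow(4)$ is immediate since injective modules are max-projective. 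For $(4)\Rightarrow(3)$, let $V$ be a simple injective module; were $V$ singular it would be max-projective by $(4)$, forcing $\Rad(V)=V$ by Lemma \ref{SingularMaxprojective}, impossible for a simple module; hence $V$ is nonsingular and therefore projective by the dichotomy above.

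The heart of the matter is $(3)\Rightarrow(2)$. Since $R$ is semilocal there are, up to isomorphism, only finitely many simple right modules, and by $(3)$ each injective one among them is projective. I would first isolate the isotypic components of the injective simple modules and show that their sum is a two-sided direct factor $S$ of $R$ which is semisimple Artinian, the complementary factor $T$ being the one in which no simple module is injective, so that $T_{T}$ is superfluous in $E(T_{T})$, i.e. $T$ is right small. Semihereditariness enters in controlling quotients of injective modules: by Megibben's theorem a quotient of an injective (hence $FP$-injective) module is again $FP$-injective, which I would use to rule out the survival of injective simples outside $S$ and to force every simple right $T$-module to be singular and non-injective. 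I expect the genuine obstacle of the theorem to lie exactly here: producing the \emph{central} idempotent that realises the injective-simple part as a ring direct factor, rather than merely as a direct summand of $R_{R}$. This is where the finiteness supplied by semilocality and the projectivity of finitely generated ideals supplied by semihereditariness must be pushed hardest.

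For $(2)\Rightarrow(1)$ I would invoke Lemma \ref{product} to treat the two factors separately. A semisimple Artinian ring is trivially right max-$QF$, every module being projective. For the right small factor $T$, the property to exploit is that every injective right $T$-module $E$ satisfies $\Rad(E)=E$; consequently every homomorphism from $E$ into a simple module is zero, so $\Hom(E,T/I)=0$ for each maximal right ideal $I$ and $E$ is vacuously max-projective. Hence $T$, and therefore $R=S\times T$, is right max-$QF$.

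It remains to incorporate $(5)$. I would first record that the torsion radical of the dual Goldie torsion theory is the operator $P(-)$ recalled above, so that its splitting means exactly that $P(M)$ is a direct summand of $M$ for every right module $M$. The implication $(2)\Rightarrow(5)$ should then follow factorwise: over the semisimple Artinian factor every module is semisimple and $P$ vanishes, while over the right small factor the largest radical submodule splits off. For the converse I would aim at $(5)\Rightarrow(4)$: applying the splitting to a singular injective $E$ gives $E=P(E)\oplus C$ with $P(C)=0$, and the problem reduces to showing $C=0$. Since $R$ is semilocal, $\Rad(C)=CJ(R)$ and $C/\Rad(C)$ is semisimple; if $C\neq0$ its singular simple summands in the top are $FP$-injective by Megibben's theorem, and I would use this together with semihereditariness to reach a contradiction, forcing $C=0$ and hence $\Rad(E)=E$, so that $E$ is max-projective by Lemma \ref{SingularMaxprojective}. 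Matching this abstract splitting condition with the concrete small-ring structure, together with the centrality issue in $(3)\Rightarrow(2)$, are the two places I expect to demand the most care.
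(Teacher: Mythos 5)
Your outer ring of implications is fine where it is actually carried out: $(1)\Rightarrow(4)$ is trivial, your $(4)\Rightarrow(3)$ via the dichotomy ``a simple module is singular or projective'' together with Lemma \ref{SingularMaxprojective} is a nice self-contained replacement for the paper's citation, and $(2)\Rightarrow(1)$ via Lemma \ref{product} matches the paper. But the two places you yourself flag as the hard points are genuine gaps, and the single missing tool in both is the same: over a semilocal ring every simple module is pure-injective (the paper cites Mao--Ding for this), so a simple module that is $FP$-injective is already \emph{injective}. Megibben alone only tells you that a simple quotient of an injective module is $FP$-injective, and condition (3) says nothing about $FP$-injective simples; without the pure-injectivity upgrade you cannot make (3) bite. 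Concretely, in your $(3)\Rightarrow(2)$ you cannot get from ``no simple $T$-module is injective'' to ``$T$ is right small'': the paper's argument takes a maximal submodule $K$ of $E(T_T)$, upgrades $E(T_T)/K$ from $FP$-injective to injective using pure-injectivity, and only then applies max-projectivity/projectivity to produce a simple injective direct summand of $T$ and a contradiction. The paper also avoids your composite arrow altogether: it proves the easy $(3)\Rightarrow(1)$ directly (a nonzero map $E\to R/I$ has simple image, which is $FP$-injective by Megibben, pure-injective by semilocality, hence injective, hence projective by (3), hence the lift exists), and then proves $(1)\Rightarrow(2)$ separately; the central idempotent there comes from the concrete choice $S=$ sum of the injective minimal right ideals, which is a two-sided ideal with $S\cap J(R)=0$, finitely generated (it embeds in $R/J(R)$) and injective, hence a ring direct factor. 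You left exactly this construction open.

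The second gap is your treatment of (5). You base $(2)\Rightarrow(5)$ and $(5)\Rightarrow(4)$ on the claim that the torsion radical of the dual Goldie torsion theory is the operator $P(-)$, but this is neither proved nor correct as stated: the class $\{M:\Rad(M)=M\}$ is not closed under submodules, so $P$ is not the radical of the (hereditary) dual Goldie torsion theory of Ramamurthi, Özcan--Harmancı and Lomp, whose associated preradical is $Z^{*}(M)=M\cap\Rad(E(M))$ (and its idempotent closure), not $P(M)$. The paper does not attempt this at all; it invokes Lomp's Theorem 4.6 to get $(2)\Rightarrow(3)\Leftrightarrow(5)$ in one stroke. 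Even granting your identification, your $(5)\Rightarrow(4)$ ends with ``reach a contradiction'' at precisely the point where you would again need either the pure-injectivity of simples or the equivalence $(5)\Leftrightarrow(3)$ that you are trying to bypass. So the proposal is a reasonable map of the theorem, but the two steps you identify as the crux are not closed, and the specific semilocal input (pure-injectivity of simple modules) that the paper leans on is absent.
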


\begin{proof} $(1) \Rightarrow (2)$ Let $S$ be the sum of the injective minimal right ideals of $R$. So $S$ is an ideal of $R$ and $S \cap J(R) = 0$.
 Thus $S$ embeds in $R/J(R)$, and so $S$ is finitely generated and injective. Then we can decompose $R$ as $R = S \times T$. Since $R$ is right max-QF, $T$ is right max-QF as well by Lemma \ref{product}. Suppose $T$ is not right small. Let $K$ be a maximal submodule of $E = E(T_T)$. Then $E/K$ is FP-injective by \cite[Theorem 2]{megibben}. Since $R$ is semilocal, $E/K$ is also pure injective by \cite[Proposition 4.3]{simples-pure-inj}. Being FP-injective and pure injective implies that $E/K$ is an injective $T$-module. Then $E/K$ is also injective as a right $R$-module by \cite[Example 3.11A]{lam}. As $T$ is right max-QF, $E/K$ is a max-projective $T$-module. Thus, $T = X \oplus Y$ for some right ideals $X$, $Y$ such that $X \cong E/K$.  So we obtain that, $X$ is a simple injective right R-module and $S \cap X = 0$, a contradiction. Therefore $T$ is a right small ring and this proves $(2)$.

$(2) \Rightarrow (1)$ Since every semisimple Artinian ring and every right small ring is max-$QF$, (1) follows by Lemma \ref{product}.

$(2) \Rightarrow (3) \Leftrightarrow (5)$ are clear by \cite[Theorem 4.6]{lomp2}.

$(1) \Rightarrow (4) \Rightarrow (3)$ are clear by \cite[Theorem 2]{maxproj}.

Now, we only need to prove $(3) \Rightarrow (1)$: Let $E$ be an injective right $R$-module and $f: E \to S$ be a homomorphism with $S$ simple. If $f=0$ then there is nothing to prove, so assume that $f \neq 0$. In this case, $f$ is an epimorphism since $S$ is simple. Since $R$ is right semihereditary, $f(E) \cong S$ is $FP$-injective by \cite[Theorem 2]{megibben}. On the other hand, since $R$ is semilocal, $S$ is pure-injective by \cite[Proposition 4.3]{simples-pure-inj}. Thus, $S$ is injective and so is projective by (3). Hence the natural map $\pi: R\to S$ splits, i.e., there exists $ {\pi}' : S \to R$ such that $\pi{\pi}' = 1_S$. Then, $\pi{\pi}'f = f$, and so $E$ is max-projective.
\end{proof}

We obtain the following corollary from Theorem \ref{prop:semilocalrightsemihereditary}.

\begin{corollary} Let $R$ be an indecomposable semilocal right semihereditary ring. Then $R$ is right max-$QF$ if and only if $R$ is semisimple artinian or right small.\\
In particular, if $R$ is a local ring, then $R$ is right max-$QF$ if and only if $R$ is right small or a division ring.
\end{corollary}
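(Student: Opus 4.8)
The plan is to derive this corollary directly from Theorem \ref{prop:semilocalrightsemihereditary}, which already establishes that a semilocal right semihereditary ring $R$ is right max-$QF$ if and only if $R = S \times T$ with $S$ semisimple Artinian and $T$ right small. The entire task is to see what the extra hypothesis of \emph{indecomposability} (as a ring) forces on this product decomposition, and then to specialize further to the local case. First I would invoke the theorem to obtain the decomposition $R = S \times T$. The key observation is that a nontrivial ring direct product corresponds to a nontrivial central idempotent, so an indecomposable ring admits no such splitting into two nonzero factors; hence one of $S$ or $T$ must be zero (equivalently, the whole of $R$). I would therefore split into the two cases: either $T = 0$, giving $R = S$ semisimple Artinian, or $S = 0$, giving $R = T$ right small.

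For the converse in the indecomposable case, I would note that both semisimple Artinian rings and right small rings are right max-$QF$ (this is exactly the ingredient used for $(2) \Rightarrow (1)$ in the theorem, via Lemma \ref{product}), so either alternative makes $R$ right max-$QF$. This closes the first equivalence. The only subtlety worth flagging is the degenerate overlap: if $R$ happened to be both semisimple Artinian and right small, the statement is still consistent since it is phrased as a disjunction, so no case analysis is lost.

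For the ``in particular'' clause, I would specialize to $R$ local. A local ring is in particular indecomposable as a ring (its only idempotents are $0$ and $1$), so the first part applies and $R$ is right max-$QF$ if and only if $R$ is semisimple Artinian or right small. The remaining point is to identify which semisimple Artinian rings are local: a semisimple Artinian ring is a finite product of matrix rings over division rings, and such a ring is local precisely when it is a single division ring (any larger product or matrix ring of size greater than one has nontrivial idempotents and hence is not local). Thus under the local hypothesis ``semisimple Artinian'' collapses to ``division ring,'' yielding the stated equivalence: $R$ is right max-$QF$ if and only if $R$ is right small or a division ring.

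I do not anticipate a genuine obstacle here, since the substantive work is done in Theorem \ref{prop:semilocalrightsemihereditary}; the corollary is a matter of translating ``indecomposable ring'' into ``no nontrivial central idempotent'' to eliminate one factor of the product, and then recalling the structure of local semisimple Artinian rings. The step requiring the most care is simply making explicit that a ring-theoretic direct product decomposition is incompatible with indecomposability, and verifying the elementary fact that a local semisimple Artinian ring is a division ring.
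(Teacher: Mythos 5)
Your proposal is correct and is exactly the derivation the paper intends: the corollary is stated as an immediate consequence of Theorem \ref{prop:semilocalrightsemihereditary}, with indecomposability killing one factor of the product $R = S \times T$ and the local case reducing ``semisimple Artinian'' to ``division ring.'' No discrepancy with the paper's (unwritten but clearly intended) argument.
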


In \cite[Proposition 14]{maxproj}, it was proved that if $R$ is a local right max-$QF$ ring, then $R$ is either right self injective or right small. Moreover, in \cite[Corollary 9]{maxproj} it was shown that over a local right noetherian ring, $R$ is right max-$QF$ if and only if $R$ is $QF$ or right small. Motivated by the aforementioned results, we have the following corresponding characterization for max-$QF$ rings over local rings.

\begin{proposition} Let $R$ be a local ring. The following are equivalent.

 \begin{enumerate}

   \item[(1)] $R$ is right max-$QF$.

   \item[(2)] \begin{enumerate}

 \item[(a)] $R$ is right small; or

 \item[(b)] $R$ is right self injective and $\Ext_R(E,J(R))=0$, for each injective right $R$-module $E$.
 \end{enumerate}
\end{enumerate}

\end{proposition}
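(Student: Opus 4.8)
The plan is to reduce the whole statement to a single homological observation. Since $R$ is local, $J := J(R)$ is the unique maximal right ideal and $R/J$ is, up to isomorphism, the only simple right $R$-module; hence a right module $M$ is max-projective precisely when every homomorphism $M \to R/J$ factors through $\pi : R \to R/J$, i.e. when the induced map $\pi_* : \Hom(M,R) \to \Hom(M,R/J)$ is surjective. Applying $\Hom(E,-)$ to the short exact sequence $0 \to J \to R \xrightarrow{\pi} R/J \to 0$ for an injective right module $E$ produces the exact sequence
\[
\Hom(E,R) \xrightarrow{\pi_*} \Hom(E,R/J) \to \Ext^1_R(E,J) \to \Ext^1_R(E,R),
\]
which ties max-projectivity of $E$ to the vanishing of the connecting map into $\Ext^1_R(E,J)$. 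This is the engine for both implications.

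For $(1)\Rightarrow(2)$, I would first invoke \cite[Proposition 14]{maxproj}: a local right max-$QF$ ring is either right small or right self injective. In the first case (2a) holds and there is nothing more to do. In the self-injective case, $R_R$ is injective, so $\Ext^1_R(E,R)=0$ for every $E$, and the exact sequence above collapses to $\Ext^1_R(E,J) \cong \operatorname{coker}(\pi_*)$. Because $R$ is right max-$QF$, every injective $E$ is max-projective, so $\pi_*$ is surjective and its cokernel --- hence $\Ext^1_R(E,J)$ --- vanishes for every injective $E$; this is exactly (2b).

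For $(2)\Rightarrow(1)$, case (2a) is immediate: every right small ring is right max-$QF$ (as already used in the proof of Theorem \ref{prop:semilocalrightsemihereditary}). In case (2b), the hypothesis $\Ext^1_R(E,J)=0$ forces $\pi_*$ to be surjective by the exact sequence, so each injective $E$ lifts every map to $R/J$ and is therefore max-projective; thus $R$ is right max-$QF$. Note that for this direction the self-injectivity of $R$ is not even needed, which clarifies why (2b) pairs the two conditions together: self-injectivity is what lets one run the argument in reverse in $(1)\Rightarrow(2)$.

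The only genuinely delicate point is the reduction in the first paragraph together with the correct bookkeeping of the $\Ext$ sequence; once max-projectivity is recognised as surjectivity of $\pi_*$ and self-injectivity as $\Ext^1_R(-,R)=0$, both directions become formal. The substantive input is borrowed rather than proved here, namely the dichotomy ``self injective or small'' from \cite[Proposition 14]{maxproj} and the fact that small rings are max-$QF$; verifying those would be the real work, but they are available to us. I would close by remarking that $R$ may satisfy both (a) and (b) simultaneously, which is harmless since (2) is a disjunction.
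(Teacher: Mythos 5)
Your proposal is correct and follows essentially the same route as the paper: both invoke \cite[Proposition 14]{maxproj} for the ``right small or right self injective'' dichotomy, and both extract the equivalence between max-projectivity of an injective $E$ and the vanishing of $\Ext^1_R(E,J)$ from the exact sequence obtained by applying $\Hom_R(E,-)$ to $0\to J\to R\to R/J\to 0$, using self-injectivity to kill $\Ext^1_R(E,R)$ in the forward direction. Your added remarks (that locality reduces max-projectivity to the single maximal ideal $J$, and that self-injectivity is not needed for $(2b)\Rightarrow(1)$) are accurate and match what the paper's argument implicitly uses.
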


\begin{proof} $(1) \Rightarrow (2)$ By \cite[Proposition 14]{maxproj}, $R$ is right small or right self-injective. Suppose $R$ is right self injective, and $E$ an injective right R-module. Consider the short exact sequence $0\rightarrow J\rightarrow R\rightarrow R/J\rightarrow 0$ where $J=J(R)$, the Jacobson radical of $R$. Applying ${\rm Hom}_R(E,-)$, we obtain the sequence:
 $${\Hom}_{R}(E,R) \rightarrow {\Hom}_{R}(E,R/J) \rightarrow {\Ext}_{R}^{1}(E,J) \rightarrow {\Ext}_{R}^{1}(E,R)$$


%

Since $R$ is right self-injective, ${\Ext}_{R}^{1}(E,R)=0$. On the other hand, since $R$ is right max-QF, the map ${\Hom}_{R}(E,R) \to {\Hom}_{R}(E,R/J)$ is onto. Therefore ${\Ext}_{R}^{1}(E,J)=0$. This proves (2).

$(2) \Rightarrow (1)$ Suppose $(a)$, that is $R$ is right small. Then $\Rad(E)=E$ for each injective right $R$-module. Since $R/J(R)$ is simple, $\Rad(R/J(R))=0$. As $f(\Rad(M)) \subseteq \Rad(N)$ for each right modules $M$, $N$ and $f \in {\Hom}_{R}(M,N)$, we have that ${\Hom}_{R}(E,R/J(R))=0$. Therefore, $E$ is trivially max-projective and so $R$ is right max-QF.

Now, assume $(b)$. Then for each injective right R-module $E$, the short exact sequence $0\rightarrow J\rightarrow R\rightarrow R/J\rightarrow 0$ where $J=J(R)$, the Jacobson radical of $R$, induces the sequence: ${\Hom}_{R}(E,R) \rightarrow {\Hom}_{R}(E,R/J) \rightarrow {\Ext}_{R}^{1}(E,J)$. By $(b)$ we have ${\Ext}_{R}^{1}(E,J)=0$. Thus ${\Hom}_{R}(E,R) \rightarrow {\Hom}_{R}(E,R/J)$ is onto, and so $E$ is max-projective. Hence $R$ is right max-$QF$.
\end{proof}

%

Recall that $R$ is a right PF ring if $R_{R}$ is an injective cogenerator for the category of right $R$-modules, equivalently $R$ is right self-injective and right socle of $R$ is finitely generated and essential in $R_{R}$ (see, \cite[Theorem 1.56]{quasi-frobenious}). In the following result we give some necessary conditions for a ring to be right max-$QF$.

\begin{proposition} Let $R$ be a local  nonsmall right max-QF ring. Then $R$ is a right self-injective ring and $R$ satisfies one of the following conditions:

\begin{enumerate}

\item[(1)] $R$ is a right PF ring, and every injective right module $E$ has a decomposition $E=E_1 \oplus E_2$, where $\Soc(E_1)$ is essential in $E_1$, $\Rad(E_2)=E_2$ and $\Soc(E_2)=0$.

\item[(2)] Every injective right $R$-module $E$ has a decomposition $E=E_1 \oplus E_2$, where $\Soc(E_1)$ is essential in $E_1$, $\Rad(E_1)=E_1$, and $\Soc(E_2)=0$.
\end{enumerate}
\end{proposition}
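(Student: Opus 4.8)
The plan is to first pin down the structure of $R$ and then read off the two cases from the socle of $R_R$. Since $R$ is local and nonsmall, \cite[Proposition 14]{maxproj} forces $R$ to be right self-injective, which already gives the first assertion. Being local, $R$ has no idempotents other than $0$ and $1$, so $R_R$ is indecomposable; and being self-injective it equals its own injective hull. Hence $R_R$ is a uniform injective module, and the socle of a uniform module is either $0$ or simple (two simple submodules would intersect nontrivially by uniformity, hence coincide), and if nonzero it is essential. I would therefore split into two cases. If $\Soc(R_R)\neq 0$, it is simple, essential and finitely generated, so by the recalled characterization of right $PF$ rings (right self-injective with finitely generated essential right socle) $R$ is right $PF$; this produces case (1). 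If $\Soc(R_R)=0$, I aim for case (2).

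For the decomposition I would use one construction uniformly. Given an injective right module $E$, let $E_1$ be an injective hull of $\Soc(E)$ chosen inside $E$; since injective submodules are closed and hence direct summands, $E=E_1\oplus E_2$. By construction $\Soc(E_1)=\Soc(E)$ is essential in $E_1$, and because $\Soc(E)\subseteq E_1$ we get $\Soc(E_2)=\Soc(E)\cap E_2=0$. This single decomposition already supplies the socle conditions demanded in both (1) and (2); what remains in each case is exactly one radical identity.

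The radical identities come from max-projectivity, using that $R$ is local, so $J(R)$ is the only maximal right ideal and max-projectivity of an injective $N$ means every epimorphism $N\to R/J(R)$ lifts along $\pi\colon R\to R/J(R)$ to some $g\colon N\to R$. The mechanism I would exploit is: if such a $g$ is nonzero, then $\pi g$ being onto $R/J(R)$ forces $\Image(g)\not\subseteq J(R)$, so $\Image(g)$ is a right ideal containing a unit, whence $\Image(g)=R$ and $g$ is surjective. In case (2) I apply this to $E_1$: any $g\colon E_1\to R$ sends the semisimple module $\Soc(E_1)$ into $\Soc(R_R)=0$, so $\Ker(g)\supseteq \Soc(E_1)$ is essential in $E_1$; then $\Image(g)\cong E_1/\Ker(g)$ is singular, contradicting $\Image(g)=R$ (as $R_R$ is never singular, $1\notin Z(R_R)$). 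Hence $\Hom(E_1,R/J(R))=0$, i.e. $\Rad(E_1)=E_1$. In case (1) I apply it to $E_2$: a surjection $g\colon E_2\to R$ splits because $R_R$ is projective, so a copy of $R$ is a direct summand of $E_2$; but then $0\neq\Soc(R_R)$ embeds into $\Soc(E_2)=0$, a contradiction, giving $\Rad(E_2)=E_2$.

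I expect the main obstacle to be the radical identities rather than the bookkeeping of the decomposition: specifically, making the ``$\Image(g)$ contains a unit, hence equals $R$'' step carry the full weight without any nonsingularity hypothesis on $R$, and correctly coupling it to the socle of $R$ in each case (the image of $\Soc(E_1)$ landing in $\Soc(R_R)$, and the splitting off of a copy of $R_R$). The other delicate point is the dichotomy itself, which hinges entirely on the uniformity of $R_R$ to guarantee that $\Soc(R_R)$ is $0$ or simple; once that is in place, the separation into the $PF$ case and the socle-free case is forced.
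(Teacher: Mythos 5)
Your proposal is correct and follows essentially the same route as the paper's proof: the same appeal to \cite[Proposition 14]{maxproj} for self-injectivity, the same dichotomy on $\Soc(R_R)$ via uniformity of the local self-injective ring, the same decomposition $E=E(\Soc(E))\oplus E_2$, and the same lifting of a nonzero map to $R/J(R)$ to produce a surjection onto $R$. The only (cosmetic) divergence is in the zero-socle case, where you get the contradiction from $\Ker(g)\supseteq\Soc(E_1)$ being essential forcing $\Image(g)=R$ to be singular, whereas the paper splits off a copy of $R$ inside $E_1$ and intersects it with the essential socle; both are valid.
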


\begin{proof} Since $R$ is right max-$QF$ and nonsmall, $R$ is right self injective by \cite[Proposition 14]{maxproj}. Now we divide the proof into two cases:

$\mathbf{Case \ I:}$ Assume $\Soc(R_R) \neq 0$. As $R$ is local, it is indecomposable. Thus $R$ is right uniform, as it is right self-injective. Therefore $\Soc(R_R)$ is simple, and so $R$ is a right PF ring by \cite[Theorem 1.56]{quasi-frobenious}. Let $E$ be an injective right $R$-module and $E_1 = E(\Soc(E))$. Then $E = E_1 \oplus E_2$ with $\Soc(E_2)=0$. Let us show that $\Rad(E_2)=E_2$. Assume that $\Rad(E_2) \neq E_2$. Let $f : E_2 \to R/J(R)$ be a nonzero homomorphism. Since $E_2$ is max-projective, there is a homomorphism $g:E_2 \to R$ such that $f = \pi \circ g$, where $\pi : R \to R/J(R)$ is the natural epimorphism. As $f$ is injective and $\pi$ is a small epimorphism, $g$ is an epimorphism. Then $g$ splits because $R$ is projective. So $E_2 \cong R \oplus \Ker(g)$. This isomorphism and $\Soc (R_R) \neq 0$ implies that $\Soc(E_2) \neq 0$, a contradiction. Thus, $\Rad(E_2)=E_2$, and so $E$ has the desired decomposition. This proves (1).

$\mathbf{Case \ II:}$ Now, assume that $\Soc(R_R)=0$. As in the first case, for any injective right module $E$, we have $E= E_1 \oplus E_2$, where $E_1 = E(\Soc(E))$. Clearly $E_1$ has essential socle, because every module is essential in its injective hull. Also $\Soc(E_2) = E_2$. Let us show that $\Rad(E_1) = E_1$. Assume that $\Rad(E_1) \neq E_1$. Let $f: E_1 \to R/J(R)$ be a nonzero homomorphism. By similar arguments as in the Case I, we have $E_1 \cong R \oplus \Ker(g)$ for some homomorphism $g: E_1 \to R$.
This isomorphism implies that $E_1$ has a nonzero submodule, say $X$, isomorphic to $R$. Since $\Soc(R)=0$, we also have $\Soc(X)=0$. Thus $X \cap \Soc(E_1)=0$. This contradicts with the fact that $\Soc(E_1)$ is essential in $E_1$. Thus, we must have $\Rad(E_1)=E_1$. This proves (2).
\end{proof}

\begin{lemma} \label{hereditary}
Let $R$ be a ring and $M$ a right R-module. If $M/\Rad(M)$ is max-projective, then $M$ is max-projective.
\end{lemma}

\begin{proof}
  Let $f:M/\Rad(M) \to R/I$ be homomorphism, where $I$ is a maximal right ideal of $R$. Then $f \circ \eta :M \to R/I$ is a homomorphism, where $\eta : M \to M/\Rad(M)$ is the natural epimorphism. Since $\Rad(M)\subseteq \Ker(f)$, by the First Isomorphism Theorem, there exists a homomorphism $\bar{f}: M/\Rad(M) \to R/I$ such that $\bar{f} \circ \eta = f$. Since $M/\Rad(M)$ is max projective, there exist a homomorphism $g: M \to R$ such that
  $\pi \circ g = \bar{f} \circ \eta$ where $\pi : R \to R/I$ is the natural epimorphism. Now, compose $\pi \circ g = \bar{f}$ with $\eta$ from the right we got $\pi \circ (g \circ \eta) = \bar{f} \circ \eta = f$.\\
  Hence $g \circ \eta$ lifts $f$, i.e. $M$ is max-projective.
\end{proof}

Recall that $R$ is said to be right hereditary if all right ideals are projective, equivalently every quotient of injective right modules is injective. In \cite[Theorem 2]{maxproj}, over a right hereditary ring, some characterizations of max-$QF$ rings are given. Using Lemma \ref{hereditary}, the following proposition will give further characterizations of max-$QF$ rings.

\begin{proposition} The following are equivalent for a right hereditary ring $R$.
\begin{enumerate}

  \item[(1)] $R$ is right max-$QF$.

  \item[(2)] For every injective right module $E$, the module $E/\Rad(E)$ is max-projective.

  \item[(3)] Every injective right module $E$ with $\Rad(E)=0$ is max-projective.

\end{enumerate}

\end{proposition}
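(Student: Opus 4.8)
The plan is to establish the cycle $(1)\Rightarrow(2)\Rightarrow(3)\Rightarrow(1)$, the main tool being the characterization of right hereditary rings as those over which every quotient of an injective right module is injective, together with Lemma \ref{hereditary}.

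For $(1)\Rightarrow(2)$, given an injective right module $E$, the quotient $E/\Rad(E)$ is again injective because $R$ is right hereditary; since $R$ is right max-$QF$ it is then max-projective, which is precisely (2). The step $(2)\Rightarrow(3)$ is immediate, for if $\Rad(E)=0$ then $E/\Rad(E)=E$, and (2) asserts exactly that $E$ is max-projective.

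The substance lies in $(3)\Rightarrow(1)$. For an arbitrary injective right module $E$, I would pass to $E/\Rad(E)$, which is injective by the hereditary hypothesis and satisfies $\Rad(E/\Rad(E))=0$ by the standard fact that a module modulo its radical has zero radical (the maximal submodules of $E/\Rad(E)$ correspond to the maximal submodules of $E$, and their intersection is $\Rad(E)/\Rad(E)=0$). Hypothesis (3), applied to the injective module $E/\Rad(E)$ with vanishing radical, then forces $E/\Rad(E)$ to be max-projective; Lemma \ref{hereditary} lifts this conclusion back to $E$, so that $E$ itself is max-projective. Since $E$ was an arbitrary injective module, $R$ is right max-$QF$.

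The argument is essentially formal once these ingredients are assembled; the only points requiring care are the interplay between the hereditary property, which keeps $E/\Rad(E)$ injective, and Lemma \ref{hereditary}, which closes the loop, and I do not anticipate a genuine obstacle beyond recording the elementary fact that $\Rad(E/\Rad(E))=0$.
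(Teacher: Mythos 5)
Your proof is correct and follows essentially the same route as the paper's: the cycle $(1)\Rightarrow(2)\Rightarrow(3)\Rightarrow(1)$, using the hereditary hypothesis to keep $E/\Rad(E)$ injective and Lemma \ref{hereditary} to lift max-projectivity from $E/\Rad(E)$ back to $E$. Your write-up is in fact slightly more careful than the paper's, which leaves implicit the verification that $E/\Rad(E)$ is injective with zero radical before invoking (3).
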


\begin{proof} $(1) \Rightarrow (2)$ straightforward by the right hereditary assumption.

$(2) \Rightarrow (3)$ is clear.

$(3) \Rightarrow (1)$ Let $E$ be an injective right $R$-module. Since $E/\Rad(E)$ has zero radical, $E/\Rad(E)$ is max-projective by $(3)$. Then $E$ is max-projective by Lemma \ref{hereditary}. Thus, $R$ is right max-$QF$.
\end{proof}

\newpage

\section{max-QF and almost-QF rings are not left-right symmetric}

In this section, we show that, for a ring, being max-$QF$ and almost-$QF$ are not left-right symmetric. Small's famous example, is an example of such a ring. 

\begin{proposition} [\bf{Small's Example}]
Consider the ring $R=\begin{pmatrix}
  \Z & \Q\\
  0 & \Q
\end{pmatrix}$.\\
Then $R$ has the following properties.
\begin{enumerate}
  \item[(1)] R is right Noetherian but not left Noetherian.
  \item[(2)] R is right hereditary left semihereditary but not left hereditary.
\end{enumerate}
\end{proposition}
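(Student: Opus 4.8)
The plan is to realize $R$ as the formal triangular matrix ring $\begin{pmatrix} A & M\\ 0 & B\end{pmatrix}$ with $A=\Z$, $B=\Q$ and bimodule $M={}_{\Z}\Q_{\Q}$, and then to run everything through the standard equivalence between right (respectively left) $R$-modules and triples $(X,Y,\phi)$, where $X$ is a right (respectively left) $A$-module, $Y$ a right (respectively left) $B$-module, and $\phi$ a $B$-linear connecting map. The single fact driving every asymmetry is that $M_{\Q}$ is a one-dimensional $\Q$-vector space, hence free, finitely generated and projective, whereas ${}_{\Z}\Q$ is flat but neither finitely generated nor projective over $\Z$. I would also invoke the elementary projectivity criterion in the triple category: a right-module triple $(X,Y,\phi)$ is projective precisely when $X_{A}$ is projective, $\phi$ is a monomorphism, and $\operatorname{coker}\phi$ is a projective $B$-module, and symmetrically on the left. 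This is immediate from the description of the indecomposable projectives $e_1R$ and $e_2R$ (with $e_1=\begin{pmatrix}1&0\\0&0\end{pmatrix}$, $e_2=\begin{pmatrix}0&0\\0&1\end{pmatrix}$), and its easy half (projective $\Rightarrow$ cokernel projective) suffices to detect non-projectivity.

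For (1), I would use the decomposition $R_R=e_1R\oplus e_2R$. Here $e_2R\cong\Q$ is a simple right module, and $e_1R$ fits into a short exact sequence $0\to e_1Re_2\to e_1R\to \Z\to 0$ with $e_1Re_2\cong e_2R$ and Noetherian ends, so $e_1R$ and hence $R_R$ is Noetherian; equivalently one may cite the standard criterion that the triangular ring is right Noetherian iff $A,B$ are and $M_B$ is finitely generated. For the left side the left ideal $\begin{pmatrix}0&\Q\\0&0\end{pmatrix}$ is acted on by $R$ only through its $(1,1)$-entry, so it equals ${}_{\Z}\Q$; since this is not finitely generated, $R$ is not left Noetherian.

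For the hereditary claims in (2), I would argue right heredity by showing every submodule of a projective right module is projective. Given a submodule $(X',Y',\phi')$ of a projective $P_1^{(I)}\oplus P_2^{(J)}$, the module $X'$ is a subgroup of a free $\Z$-module, hence free since $\Z$ is a PID; the restricted map $\phi'$ stays injective because $\Q$ is $\Z$-flat; and $\operatorname{coker}\phi'$ is automatically projective over the field $\Q$. By the criterion $(X',Y',\phi')$ is projective, so every right ideal is projective and $R$ is right hereditary. The same criterion exhibits the failure on the left: the left ideal $\begin{pmatrix}0&\Q\\0&0\end{pmatrix}$ corresponds to the triple $({}_{\Z}\Q,0,0)$, whose cokernel is ${}_{\Z}\Q$, not a projective $\Z$-module; hence this left ideal is not projective and $R$ is not left hereditary.

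Finally, for left semiheredity, which is the crux, I would take a finitely generated submodule $(X',Y',\psi')$ of a projective left module and analyze $\operatorname{coker}\psi'=X'/\operatorname{im}\psi'$. It is finitely generated over $\Z$ because it is generated by the $A$-components of a finite generating set of the module, and it is torsion-free because $\operatorname{im}\psi'$ is a $\Q$-subspace of the ambient torsion-free module, so any element whose integer multiple lies in $\operatorname{im}\psi'$ already lies there. A finitely generated torsion-free $\Z$-module is free, hence projective, so the criterion gives that $(X',Y',\psi')$ is projective, and $R$ is left semihereditary. The main obstacle is exactly this last step: one must separate the finitely generated left ideals, whose $A$-cokernels are forced to be finitely generated and torsion-free hence free, from the offending non-finitely-generated ideal ${}_{\Z}\Q$ itself, and the entire distinction between ``left semihereditary'' and ``left hereditary'' hinges on ${}_{\Z}\Q$ being flat but not projective.
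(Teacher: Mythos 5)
Your proposal is correct, but it takes a genuinely different route from the paper: the paper's entire proof is a two-line citation (part (1) to \cite[1.22]{lamfc} and part (2) to \cite[2.33]{lam}), whereas you give a self-contained argument via the equivalence between one-sided $R$-modules and triples $(X,Y,\phi)$ over the formal triangular matrix ring, together with the standard projectivity criterion in that category. Your version has the merit of isolating the one fact responsible for every asymmetry — that $\Q$ is finitely generated projective as a $\Q$-module but only flat, not projective or finitely generated, as a $\Z$-module — and each of the four claims (right Noetherian, not left Noetherian, right hereditary, not left hereditary, left semihereditary) falls out of the same criterion; the torsion-freeness argument for $\operatorname{coker}\psi'$ in the semihereditary step is the only delicate point and you handle it correctly. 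What the paper's citation buys instead is brevity and, more importantly, the explicit classification of the left ideals of $R$ from \cite[2.33]{lam} ($N_1$, $N_2$, $N_V$), which the paper reuses verbatim later in the Baer-criterion computation of Lemma \ref{lem:Sisinjective}; your triple formalism would require a separate translation step to recover that list. If you intend your proof to replace the citation in this paper, you should either state the projectivity criterion for triples with a reference (it is standard but not proved here) or note that the left-ideal classification still needs to be extracted for the later lemmas.
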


\begin{proof}
 The proof of (1) follows from \cite[1.22]{lamfc} and of (2) follows from \cite[2.33]{lam}.
\end{proof}

\begin{lemma}\label{lem:map} The map $\phi: \begin{pmatrix}

  \Z & \Q\\

  0 & \Q

\end{pmatrix} \to \Q$  given by $\phi (\begin{pmatrix}

  a & b\\

  0 & c

\end{pmatrix})=c$ is a ring epimorphism, and $\Ker(\phi)=\begin{pmatrix}

  \Z & \Q\\

  0 & 0 \end{pmatrix}$ is a maximal ideal of $R$.
\end{lemma}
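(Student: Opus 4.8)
The plan is to verify directly that $\phi$ respects the ring structure, identify its kernel by an elementary computation, and then read off maximality from the first isomorphism theorem. The whole argument is a routine check, so I would organize it to keep the verifications transparent.

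First I would confirm that $\phi$ is additive, which is immediate since the $(2,2)$-entry of a sum of upper triangular matrices is the sum of their $(2,2)$-entries. For multiplicativity I would compute the product
\[
\begin{pmatrix} a & b \\ 0 & c \end{pmatrix}\begin{pmatrix} a' & b' \\ 0 & c' \end{pmatrix} = \begin{pmatrix} aa' & ab'+bc' \\ 0 & cc' \end{pmatrix},
\]
whose $(2,2)$-entry is $cc'$, so that $\phi$ carries products to products. Since $\phi$ sends the identity matrix to $1 \in \Q$ and is plainly onto (each $c \in \Q$ is the image of $\begin{pmatrix} 0 & 0 \\ 0 & c \end{pmatrix}$), this establishes that $\phi$ is a surjective ring homomorphism, i.e.\ a ring epimorphism. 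I would then note that $\phi$ vanishes on a matrix precisely when its $(2,2)$-entry is $0$, whence $\Ker(\phi) = \begin{pmatrix} \Z & \Q \\ 0 & 0 \end{pmatrix}$, a two-sided ideal of $R$ as the kernel of a ring homomorphism.

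Finally, to see that $\Ker(\phi)$ is a maximal ideal, I would apply the first isomorphism theorem to obtain $R/\Ker(\phi) \cong \Q$. As $\Q$ is a field, it is a simple ring whose only two-sided ideals are $0$ and $\Q$, and therefore $\Ker(\phi)$ is a maximal two-sided ideal of $R$. There is no substantial obstacle here: the argument reduces to a single matrix multiplication together with the quotient identification. The only point deserving a word of care is the distinction between maximal two-sided and maximal one-sided ideals; since the statement concerns the two-sided ideal structure, the simplicity of $\Q$ settles the matter at once.
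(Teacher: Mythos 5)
Your proof is correct and follows the same route as the paper: a direct verification that $\phi$ is a surjective ring homomorphism, followed by the first isomorphism theorem and the fact that $\Q$ is a field to conclude that $\Ker(\phi)$ is maximal. You simply spell out the computations that the paper leaves as "easy to check."
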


\begin{proof} It is easy to check that $\phi$ is a ring epimorphism. The first isomorphism theorem and the fact that $\Q$ is a field, implies that $Ker(\phi)$ is a maximal ideal of $R$.

\end{proof}

\begin{lemma}\label{lem:Sisinjective} With the notations in Lemma \ref{lem:map}, the simple left $R$-module $S=R/\Ker(\phi)$ is  singular and injective.

\end{lemma}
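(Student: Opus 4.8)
The plan is to prove the two assertions—singularity and injectivity—separately, since they rest on quite different ideas. For singularity, I would first record how $R$ acts on $S$. Under the identification $S=R/\Ker(\phi)\cong\Q$ furnished by $\phi$, the left action of $r\in R$ on $S$ is just multiplication by the scalar $\phi(r)$; in particular the action factors through $\phi$, and $S$ is one-dimensional over $\Q$, which also reconfirms that $S$ is simple. It follows that $\operatorname{ann}_R(S)$ equals the two-sided ideal $K=\Ker(\phi)=\begin{pmatrix}\Z&\Q\\0&0\end{pmatrix}$, so every element of $S$ is annihilated by $K$, and it remains only to prove that $K$ is an \emph{essential} left ideal of $R$; combined with $K\subseteq\operatorname{ann}_R(S)$ this will give $Z(S)=S$.

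To see that $K$ is essential, I would check that $Rr\cap K\neq 0$ for every nonzero $r=\begin{pmatrix}a&b\\0&c\end{pmatrix}$. If $c=0$, then $r\in K$ already. If $c\neq 0$, left multiplication by $\begin{pmatrix}0&1\\0&0\end{pmatrix}$ produces $\begin{pmatrix}0&c\\0&0\end{pmatrix}\in K$, which is nonzero. Thus $K$ meets every nonzero cyclic, hence every nonzero, left submodule of $R$, so $K$ is essential; since it annihilates $S$, the module $S$ is singular.

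For injectivity, the key observation is that $S$ is the restriction of scalars, along the ring epimorphism $\phi\colon R\to\Q$ of Lemma \ref{lem:map}, of the left $\Q$-module $\Q$, which is injective because $\Q$ is a field. The plan is therefore to invoke the standard adjunction fact that the restriction functor $\phi_{*}\colon\Q\text{-Mod}\to R\text{-Mod}$ is right adjoint to $\Q\otimes_{R}-$, so $\phi_{*}$ preserves injectives provided $\Q$ is flat as a \emph{right} $R$-module (via $\phi$). The heart of the argument, and the step I expect to be the main point, is verifying this flatness. I would do this by identifying $\Q$, as a right $R$-module, with the right ideal $e_{22}R=\begin{pmatrix}0&0\\0&\Q\end{pmatrix}$, where $e_{22}=\begin{pmatrix}0&0\\0&1\end{pmatrix}$: the assignment $\begin{pmatrix}0&0\\0&q\end{pmatrix}\mapsto q$ is a right $R$-module isomorphism onto $\Q$. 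Since $R_{R}=e_{11}R\oplus e_{22}R$ with $e_{11}=\begin{pmatrix}1&0\\0&0\end{pmatrix}$, the module $e_{22}R$ is a direct summand of $R_{R}$, hence projective and in particular flat. Consequently $\phi_{*}(\Q)=S$ is an injective left $R$-module, which completes the proof.

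An alternative route to injectivity would be a direct application of Baer's criterion, extending each $f\colon L\to S$ from a left ideal $L$ to $R$; but because $R$ is not left Noetherian the lattice of left ideals is unwieldy, so the flatness-and-restriction argument above seems the cleaner one, once the identification $\Q_{R}\cong e_{22}R$ is in hand.
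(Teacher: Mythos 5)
Your proof is correct, but the injectivity half takes a genuinely different route from the paper. For singularity the two arguments are essentially the same: you both reduce to showing $\Ker(\phi)$ is an essential left ideal (the paper argues it is not a direct summand and invokes maximality, you verify $Rr\cap\Ker(\phi)\neq 0$ directly), and your observation that $\Ker(\phi)$ annihilates all of $S$ then gives $Z(S)=S$ just as the paper's citation of Goodearl does. For injectivity, however, the paper runs Baer's criterion by hand: it quotes the classification of the left ideals of $R$ into the three families $N_1$, $N_2$, $N_V$ from Lam and extends homomorphisms into $S$ case by case (using an annihilator argument to rule out the kernels $K_p$, a direct-summand argument for $N_2$, and $\Hom(N_V,S)=0$ for the third family). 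You instead observe that $S=\phi_*(\Q)$ is the restriction of scalars of the injective left $\Q$-module $\Q$, and that restriction along $\phi$ preserves injectives because its left adjoint $\Q\otimes_R-$ is exact, which follows from $\Q_R\cong e_{22}R$ being a direct summand of $R_R$ and hence projective and flat; all of these identifications check out. Your argument is shorter, avoids the classification of left ideals entirely, and isolates the structural reason for injectivity (it proves more generally that any injective module over a quotient $R/I$ that is flat on the right restricts to an injective $R$-module), whereas the paper's computation is self-contained and elementary but considerably longer. Either proof is acceptable.
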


\begin{proof} Set $L=\Ker(\phi)$. Since $L$ is a maximal left ideal, it is essential or a direct summand of $R$. Assume that $L\oplus K=R$ for some left ideal $K$ of $R$. Then $l+k=\begin{pmatrix}

  1 & 0\\

  0 & 1

\end{pmatrix}$ for some $l \in L$ and $k \in K$. Thus $K$ contains an element of the form $\begin{pmatrix}

  a & b\\

  0 & 1

\end{pmatrix}$, where $a \in \Z$ and $b \in \Q$. Then $$\begin{pmatrix}

  0 & 1\\

  0 & 0

\end{pmatrix}\cdot \begin{pmatrix}

  a & b\\

  0 & 1

\end{pmatrix}=\begin{pmatrix}

  0 & 1\\

  0 & 0

\end{pmatrix} \in K\cap L$$ a contradiction. Therefore $L$ is an essential left ideal of $R$, and so $S =R/L$ is singular by \cite[Proposition 1.20(b)]{goodearl}.

















Now let us prove that $S$ is an injective left $R$-module. For this purpose, we shall use the Baer's Criteria for injectivity. First note that by \cite[2.33]{lam}, the left ideals of $R$ are  of the following form:
\begin{align*}
  N_{1} &= {\begin{pmatrix}
  n\Z & \Q\\
  0 & \Q
\end{pmatrix}} \qquad (n \neq 0) \\
  N_{2} &= {\begin{pmatrix}
  0 & \Q\\
  0 & \Q
\end{pmatrix}} \\
  N_{V} &= \{{\begin{pmatrix}
  x & y\\
  0 & 0
\end{pmatrix}} : (x,y) \in V \quad \text{( a subgroup of $\Z \oplus \Q)$}\}
\end{align*}

We shall show that,  each nonzero homomorphism $f$ from $N_1,\,N_2$ and $N_V$ to $S$ can be extended to a homomorphism $g: R \to S$.

{\bf I:} Suppose $f: N_{1} \to S$ is a nonzero homomorphism. Since $S$ is a  simple left ideal and $f$ is nonzero,  $Im(f)=S$. Then  $\Ker(f)$ is a maximal submodule of $N_{1}$.  It is easy to check that, maximal submodules of $N_{1}$ are of the form:\\
\begin{align*}
  K_{p} &= {\begin{pmatrix}
  pn\Z & \Q\\
  0 & \Q
\end{pmatrix}} \qquad (p\,\, \text{is prime}) \\
  K &= {\begin{pmatrix}
  n\Z & \Q\\
  0 & 0
  \end{pmatrix}}
\end{align*}
If $\Ker(f)= K_{p}$, then $N_1 /K_p \cong S$. But

$$ ann_l(N_1 /K_p)= {\begin{pmatrix}
  p\Z & \Q\\
  0 & \Q
\end{pmatrix}} \neq {\begin{pmatrix}
  \Z & \Q\\
  0 & 0
\end{pmatrix}} =ann_l (S).$$

This is a contradiction, because isomorphic modules have the same annihilator ideal. Therefore $\Ker(f)\neq K_p$. \\

If $\Ker(f)=K$, then for ${\begin{pmatrix}
  nk & a\\
  0 & b
\end{pmatrix}} \in N_1$ we have

$$f{\begin{pmatrix}
  nk & a\\
  0 & b
\end{pmatrix}}=f{\begin{pmatrix}
  nk & a\\
  0 & 0
\end{pmatrix}}+ f{\begin{pmatrix}
   0& 0\\
  0 & b
\end{pmatrix}}=f{\begin{pmatrix}

  0 & 0\\

  0 & b

\end{pmatrix}}={\begin{pmatrix}

  0 & 0\\

  0 & b

\end{pmatrix}} \cdot f{\begin{pmatrix}

  0 & 0\\

  0 & 1

\end{pmatrix}}$$

Let $f{\begin{pmatrix}

  0 & 0\\

  0 & b

\end{pmatrix}}= {\begin{pmatrix}

  0 & 0\\

  0 & s

\end{pmatrix}} + {\begin{pmatrix}

  \Z & \Q \\

  0 & 0

\end{pmatrix}}.$ Then, it is easy to check that

$f{\begin{pmatrix}

  nk & a\\

  0 & b

\end{pmatrix}}={\begin{pmatrix}

  nk & a\\

  0 & b

\end{pmatrix}}\cdot f{\begin{pmatrix}

  0 & 0\\

  0 & 1

\end{pmatrix}}. $ That is $f$ is the right multiplication by

$f{\begin{pmatrix}

  0 & 0\\

  0 & 1

\end{pmatrix}}$. Thus the map $g: R \to S$ defined by $g{\begin{pmatrix}

  1 & 0\\

  0 & 1

\end{pmatrix}}=f{\begin{pmatrix}

  0 & 0\\

  0 & 1

\end{pmatrix}}$ extends $f$.

In conclusion , we see that each homomorphism $f: N_1 \to S$ extends to a homomorphism $g: R \to S$.

{\bf II:} Consider the left ideal $N_{2}={\begin{pmatrix}

  0 & \Q\\

  0 & \Q

\end{pmatrix}}$ and let $f:N_{2}\to S$ be a homomorphism. Since

\begin{equation*}
  {_RR=\begin{pmatrix}

  \Z & \Q\\

  0 & \Q

\end{pmatrix}} = {\begin{pmatrix}

  \Z & 0\\

  0 & 0

\end{pmatrix}} \oplus {\begin{pmatrix}

  0 & \Q\\

  0 & \Q

\end{pmatrix},}
\end{equation*} $N_{2}$ is a direct summand of \textit{R}. Let $\pi:R\to N_{2}$ be the projection homomorphism and  $i: N_2 \to R$ the inclusion homomorphism. Then the homomorphism $g = f \circ \pi. :R \to S$ extends $f$.

{\bf III:} Consider the left ideal $N_{V}$ = $\{{\begin{pmatrix}

  x & y\\

  0 & 0

\end{pmatrix}}$ : $(x,y) \in V$ \text{(V is a subgroup of $\Z \oplus \Q$)}\}. Since ${\begin{pmatrix}

  n & q_1\\

  0 & q_2

\end{pmatrix}}\cdot {\begin{pmatrix}

  x & y\\

  0 & 0

\end{pmatrix}}=  {\begin{pmatrix}

  nx & ny\\

  0 & 0

\end{pmatrix}} $, the left multiplication on $N_V$ is determined by $\Z$. Therefore the lattice of left submodules of $N_V$ and that of $V$ are isomorphic. Therefore, for each maximal submodule $K$ of $N_V$,  $N_V/K\cong \Z_p$ for some prime integer $p$. Hence, as $S$ is infinite,  $\Hom(N_V,\, S)=0$. This means that, any homomorphism from $N_V \to S$ extends trivially to a homomorphism $R \to S.$

Thus, by Baer Criteria, $S$ is an injective simple left $R$-module. This completes the proof.
\end{proof}

\begin{lemma}

  R=${\begin{pmatrix}

  \Z &  \Q\\

  0 & \Q

\end{pmatrix}}$ is not left small in the left $R$-module $W$=${\begin{pmatrix}

  \Q & \Q\\

  \Q & \Q

\end{pmatrix}}$

\end{lemma}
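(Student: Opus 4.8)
The plan is to negate the definition of smallness directly. Recall that $R$ being small in $W$ means that $R + L = W$ forces $L = W$ for every left $R$-submodule $L \le W$; so to prove $R$ is \emph{not} small, I only need to exhibit a single proper left $R$-submodule $L \subsetneq W$ with $R + L = W$. The natural place to look is the column decomposition of $W$ as a left $R$-module. Since left multiplication by a matrix of $R$ preserves columns, $W$ splits as $W = C_1 \oplus C_2$ with $C_1 = \begin{pmatrix} \Q & 0 \\ \Q & 0 \end{pmatrix}$ and $C_2 = \begin{pmatrix} 0 & \Q \\ 0 & \Q \end{pmatrix}$, and both are left $R$-submodules; I would confirm this with the one-line computation $\begin{pmatrix} a & b \\ 0 & c \end{pmatrix} \begin{pmatrix} x & 0 \\ z & 0 \end{pmatrix} = \begin{pmatrix} ax + bz & 0 \\ cz & 0 \end{pmatrix}$, and similarly for $C_2$.

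The crux of the argument is the observation that the whole second column of $W$ already lies inside $R$: since $R = \begin{pmatrix} \Z & \Q \\ 0 & \Q \end{pmatrix}$ carries a full copy of $\Q$ in each of its two right-hand entries, we have $C_2 \subseteq R$. I then set $L := C_1$. This is a proper left $R$-submodule of $W$ (for instance $\begin{pmatrix} 0 & 1 \\ 0 & 0 \end{pmatrix}$ lies in $W$ but not in $C_1$), and $R + L \supseteq C_2 + C_1 = W$, so $R + L = W$. Hence $R$ is not small in $W$.

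Conceptually this is just the statement that $C_2$ is a nonzero direct summand of $W$ sitting inside $R$; a nonzero direct summand is never small, and any submodule containing a non-small submodule is itself non-small, so $R$ cannot be small. I prefer to present the explicit witness $L = C_1$ since it is entirely self-contained. I do not anticipate a genuine obstacle: the only verifications are that $C_1$ is closed under the left $R$-action and the elementary containment $C_2 \subseteq R$, both immediate from the shapes of the matrices. The one point I would state with care is that the inclusion $R \hookrightarrow W$ really is a homomorphism of left $R$-modules --- it is, because $R$ is a subring of $W$ and the left $R$-action on $W$ restricts to the left regular action on $R$ --- so that talking about $R$ as a left $R$-submodule of $W$ is legitimate.
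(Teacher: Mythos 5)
Your proof is correct, and it follows the same overall strategy as the paper: exhibit a single proper left $R$-submodule $L$ of $W$ with $R+L=W$. But the two arguments diverge in the choice of witness, and here your version is the one that actually works. The paper takes $X=\begin{pmatrix} 0 & \Q\\ 0 & \Q\end{pmatrix}$, the second-column summand; however, this $X$ is \emph{contained in} $R$ (every matrix $\begin{pmatrix} 0 & y\\ 0 & w\end{pmatrix}$ with $y,w\in\Q$ lies in $\begin{pmatrix} \Z & \Q\\ 0 & \Q\end{pmatrix}$), so $R+X=R\neq W$ and the paper's displayed witness fails to verify the claimed equality $R+X=W$. Your choice $L=C_1=\begin{pmatrix} \Q & 0\\ \Q & 0\end{pmatrix}$ is the correct one: it is a proper left $R$-submodule (left multiplication acts columnwise, and $\begin{pmatrix}0 & 1\\ 0 & 0\end{pmatrix}\notin C_1$), and since the complementary column $C_2=\begin{pmatrix}0 & \Q\\ 0 & \Q\end{pmatrix}$ sits inside $R$, one gets $R+C_1\supseteq C_2+C_1=W$. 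Your closing conceptual remark --- that $C_2$ is a nonzero direct summand of $W$ contained in $R$, and a submodule containing a non-small submodule cannot be small --- is a clean alternative packaging of the same fact and makes transparent why the lemma holds. In short, your proposal is not merely correct; it repairs what appears to be a slip in the paper's own proof (the intended witness should be the first column, not the second).
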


\begin{proof}

 Consider $X$ = ${\begin{pmatrix}

  0 & \Q\\

  0 & \Q

\end{pmatrix}}$. $X$ is a proper left submodule of $W$ and $R + X = W$. Thus, $R$ is not left small in $W$.
\end{proof}

\begin{lemma}\label{Rightsmall}

   $R={\begin{pmatrix}

  \Z &  \Q\\

  0 & \Q

\end{pmatrix}}$ is right small in the right $R$-module $T$=${\begin{pmatrix}

  \Q & \Q \\

  \Q & \Q

\end{pmatrix}}.$

\end{lemma}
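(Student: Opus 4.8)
The statement asserts that $R$ is a \emph{small} (superfluous) right $R$-submodule of $T$: whenever $R+K=T$ for a right $R$-submodule $K$ of $T$, one must have $K=T$. The plan is to exploit two features. First, $R$ is a \emph{cyclic} submodule of $T_R$, generated by the identity matrix $I=\begin{pmatrix}1&0\\0&1\end{pmatrix}\in R\subseteq T$, since $IR=R$. Second, I will show that $T$ has \emph{no} maximal submodule, i.e.\ $\Rad(T)=T$. Granting the second point, the conclusion is quick: if $R+K=T$, then by the second isomorphism theorem $T/K\cong R/(R\cap K)$ is a nonzero or zero \emph{cyclic} module; moreover the image of $\Rad(T)=T$ under $T\to T/K$ is contained in $\Rad(T/K)$, forcing $\Rad(T/K)=T/K$, so $T/K$ has no maximal submodule. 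Since every nonzero cyclic module does have a maximal submodule, we must have $T/K=0$, that is $K=T$. Hence $R$ is small in $T$.

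Thus the substance of the proof is to establish $\Rad(T)=T$. First I would record the right $R$-module structure of $T$: right multiplication by $R$ acts row by row, so $T$ decomposes as $T=V^{(1)}\oplus V^{(2)}$ into its two rows, where each summand is isomorphic to $V=(\Q\ \ \Q)$ with action $(x,y)\begin{pmatrix}a&b\\0&c\end{pmatrix}=(xa,\,xb+yc)$. Since $\Rad$ commutes with finite direct sums, it suffices to prove $\Rad(V)=V$, i.e.\ that $V$ has no maximal $R$-submodule.

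To see this I would classify the $R$-submodules of $V$. A short computation shows that any submodule containing an element with nonzero first coordinate already contains $\{0\}\times\Q$ (multiply such an element by $\begin{pmatrix}0&b\\0&0\end{pmatrix}$ with $b$ ranging over $\Q$), and is therefore of the form $W_{1}\times\Q$ for a subgroup $W_{1}\le\Q$; the only remaining submodules are $0$ and $\{0\}\times\Q$. For a proper submodule of this form the quotient is $V/(W_{1}\times\Q)\cong\Q/W_{1}$, on which $R$ acts solely through its first coordinate, i.e.\ through the ring epimorphism $R\to\Z$, $\begin{pmatrix}a&b\\0&c\end{pmatrix}\mapsto a$. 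Consequently the $R$-submodules of $\Q/W_{1}$ are exactly its $\Z$-submodules, and a \emph{simple} quotient would force $\Q/W_{1}\cong\Z/p\Z$ for some prime $p$, which is impossible because $\Q/W_{1}$ is a divisible abelian group. Hence $V$ admits no maximal submodule, so $\Rad(V)=V$ and therefore $\Rad(T)=T$, completing the argument.

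The step I expect to be the main obstacle is precisely this verification that $V$, hence $T$, has no maximal submodule. The delicate point is that simplicity of a quotient cannot be ruled out by divisibility alone: the ring $R$ genuinely has divisible simple modules — for instance the module isomorphic to $\Q$ on which $R$ acts through $r\mapsto r_{22}$, which even occurs as the submodule $\{0\}\times\Q$ of $V$. What saves the argument is the observation that on every \emph{quotient} of $V$ the action factors through the projection $R\to\Z$ onto the $(1,1)$-entry, reducing the question of simplicity to the level of $\Z$-modules, where the divisibility of $\Q/W_{1}$ excludes a simple quotient. Setting up this reduction correctly, together with the underlying submodule classification of $V$, is where the care lies; once $\Rad(T)=T$ is in hand, smallness of the cyclic submodule $R$ is formal.
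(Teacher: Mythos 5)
Your proof is correct, but it takes a genuinely different route from the paper's. The paper argues directly: it writes down a list of the right $R$-submodules of $T$ and checks by inspection that $R+X\neq T$ for every proper submodule $X$. You instead decompose $T_R$ row-wise as $V\oplus V$ with $V=(\Q\ \ \Q)$, classify the submodules of the single row $V$ to conclude that $V$ (hence $T$) has no maximal submodules, i.e.\ $\Rad(T)=T$, and then invoke the general principle that a finitely generated (here cyclic, generated by the identity matrix) submodule of a module equal to its own radical is small. What your approach buys is robustness: you only need the submodule lattice of $V$, which is easy to pin down, and the smallness itself follows from a standard lemma rather than from a case check; moreover, your care in noting that quotients of $V$ by submodules of the form $W_1\times\Q$ carry an $R$-action factoring through $\Z$ (so that divisibility of $\Q/W_1$ really does forbid a simple quotient, even though $R$ does possess divisible simple modules such as $\{0\}\times\Q$) is exactly the point where a sloppier argument would break. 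The paper's approach is more concrete but rests on having a \emph{complete} list of submodules of $T=V\oplus V$; the list given there does not obviously account for submodules that are not ``row-split'' (e.g.\ diagonal copies of $V$), so your route arguably closes a gap as well as shortening the verification. The only cosmetic omission on your side is that you should note explicitly that the zero submodule is not maximal in $V$ (i.e.\ that $V$ is not simple), which is immediate since $\{0\}\times\Q$ is a proper nonzero submodule.
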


\begin{proof} As a  right $R$-module, submodules of $T$ are of the following form:

\begin{equation*}
 N_{U} = \{{\begin{pmatrix}

  x & \Q\\

  y & \Q

\end{pmatrix}} : (x,y) \in U \text{(U is a subgroup of $\Q \oplus \Q$)\}},
\end{equation*}

\begin{equation*}
 {\begin{pmatrix}
  \Q  &  \Q\\
  0 & 0
\end{pmatrix}},
{\begin{pmatrix}
  0 &  0\\
  \Q & \Q
\end{pmatrix}},
{\begin{pmatrix}
  0 & \Q \\
  0 &  0
\end{pmatrix}},
{\begin{pmatrix}
  0 &  0\\
  0 & \Q
\end{pmatrix}}.
\end{equation*}
We see that $R +X\neq T$ for each proper submodule $X$ of $T$. Hence $R$ is a right small submodule of $T$.
\end{proof}

\begin{lemma}\label{rightRessential}

  Let $R$=${\begin{pmatrix}

  \Z &  \Q\\

  0 & \Q

\end{pmatrix}}$. Then $R_R$ is  essential in $W_{R}$=${\begin{pmatrix}

  \Q & \Q\\

  \Q & \Q

\end{pmatrix}}.$

\end{lemma}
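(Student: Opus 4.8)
The plan is to verify essentiality through the cyclic-submodule criterion: $R_R$ is essential in $W_R$ if and only if $wR \cap R \neq 0$ for every nonzero $w \in W$. So I would fix an arbitrary $w = \begin{pmatrix} a & b \\ c & d \end{pmatrix} \in W$ with $w \neq 0$ and produce an element $r \in R$ for which $0 \neq wr \in R$.

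The key observation is that right multiplication by any matrix of the form $r = \begin{pmatrix} 0 & p \\ 0 & q \end{pmatrix}$ with $p, q \in \Q$ already lands inside $R$, regardless of $w$. Indeed, $wr = \begin{pmatrix} 0 & ap + bq \\ 0 & cp + dq \end{pmatrix}$, whose bottom-left entry is $0$ and whose top-left entry is $0 \in \Z$; hence $wr \in R$. Thus the only thing left to arrange is that $wr \neq 0$ for a suitable choice of $p,q$.

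Viewing $w$ as the $\Q$-linear endomorphism of $\Q^2$ that it represents, the second column of $wr$ is exactly the image of $(p,q)^{\mathsf T}$ under $w$. Since $w \neq 0$ as a matrix, it is nonzero as a linear map, so there is a vector $(p,q)^{\mathsf T} \in \Q^2$ with $w\,(p,q)^{\mathsf T} \neq 0$. For this choice $wr$ is a nonzero element of $R$, giving $wR \cap R \neq 0$. As $w$ was arbitrary, $R_R$ is essential in $W_R$.

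There is no serious obstacle here; the whole argument is a short computation, and I expect the only points requiring care to be bookkeeping ones: first, remembering that we work with the \emph{right} $R$-module structure, so that the relevant products are $wr$ with $r$ acting on the right; and second, confirming that $\begin{pmatrix} 0 & * \\ 0 & * \end{pmatrix}$ genuinely lies in $R$, which holds because $0 \in \Z$ fills the top-left slot while the forbidden bottom-left slot is forced to be $0$. One could alternatively run through the explicit list of right $R$-submodules of $W$ recorded in Lemma \ref{Rightsmall} and check that each nonzero one meets $R$, but the cyclic criterion above is cleaner and sidesteps the case analysis.
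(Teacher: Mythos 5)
Your proposal is correct and follows essentially the same route as the paper: both verify the cyclic criterion by right-multiplying a nonzero $w\in W$ by an element of $R$ with zero first column, so that the product lands in $R$, and then arranging that the product is nonzero. The paper just does this with the two specific matrices $\left(\begin{smallmatrix}0&1\\0&0\end{smallmatrix}\right)$ and $\left(\begin{smallmatrix}0&0\\0&1\end{smallmatrix}\right)$ via a two-case split, whereas you package the same computation as a single linear-algebra observation.
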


\begin{proof}  Let ${\begin{pmatrix}

  a & b\\

  c & d

\end{pmatrix}}$ be a nonzero element of $W$. \\

Case I: If $a\neq 0$ or $c \neq 0$, then ${\begin{pmatrix}

  a & b\\

  c & d

\end{pmatrix}} \cdot {\begin{pmatrix}

  0 & 1\\

  0 & 0

\end{pmatrix}} ={\begin{pmatrix}

  0 & a\\

  0 & c

\end{pmatrix}}$ is a nonzero element of $R$.\\

Case II: If $b\neq 0$ or $d \neq 0$, then ${\begin{pmatrix}

  a & b\\

  c & d

\end{pmatrix}} \cdot {\begin{pmatrix}

  0 & 0\\

  0 & 1

\end{pmatrix}} ={\begin{pmatrix}

  0 & b\\

  0 & d

\end{pmatrix}}$ is a nonzero element of $R$.

Therefore $R_R$ is essential in $W$.
\end{proof}

\begin{lemma}\label{RightE(R)} Let R=${\begin{pmatrix}

  \Z &  \Q\\

  0 & \Q

\end{pmatrix}}$. Then $E(R_R)$, the injective hull of $R$ as a right module over itself, is the right $R$-module  $W={\begin{pmatrix}

  \Q & \Q\\

  \Q & \Q

\end{pmatrix}}.$
\end{lemma}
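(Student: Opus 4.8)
The plan is to combine the essentiality already established in Lemma~\ref{rightRessential} with a proof that $W$ is injective as a right $R$-module. Once $W$ is known to be injective, the essentiality of $R_R$ in $W$ (Lemma~\ref{rightRessential}) exhibits $W$ as an injective essential extension of $R_R$, which is exactly the characterisation of the injective hull; hence $W=E(R_R)$. So the whole problem reduces to proving that $W$ is an injective right $R$-module.

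First I would decompose $W$ as a right $R$-module using the orthogonal idempotents $e_{11}=\begin{pmatrix}1&0\\0&0\end{pmatrix}$ and $e_{22}=\begin{pmatrix}0&0\\0&1\end{pmatrix}$, both of which lie in $R$. Since left multiplication by these commutes with the right $R$-action, the sets $e_{11}W=\begin{pmatrix}\Q&\Q\\0&0\end{pmatrix}$ and $e_{22}W=\begin{pmatrix}0&0\\\Q&\Q\end{pmatrix}$ are right $R$-submodules with $W=e_{11}W\oplus e_{22}W$. A direct computation of the two right actions shows that both summands are isomorphic to the right $R$-module $V=\Q^{2}$ with action $(a,b)\cdot\begin{pmatrix}n&q\\0&p\end{pmatrix}=(an,\,aq+bp)$. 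Thus it suffices to prove that $V$ is injective.

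The key step is to realise $V$ as a homomorphic image of an injective module. Since $\Q$ is an injective $\Z$-module, the coinduced right $R$-module $N=\Hom_{\Z}({}_{R}R,\Q)$, with action $(f\cdot r)(x)=f(rx)$, is injective; this is immediate from the adjunction $\Hom_{R}(M,\Hom_{\Z}({}_{R}R,\Q))\cong\Hom_{\Z}(M,\Q)$, whose right-hand side is exact in $M$. Identifying $f\in N$ with the triple $(\alpha,\beta,\gamma)\in\Q^{3}$ determined by $f\begin{pmatrix}a&b\\0&c\end{pmatrix}=a\alpha+\beta b+\gamma c$, one computes that the right action reads $(\alpha,\beta,\gamma)\cdot\begin{pmatrix}m&s\\0&t\end{pmatrix}=(m\alpha,\,m\beta,\,s\beta+t\gamma)$, so the submodule $U=\{(\alpha,0,0):\alpha\in\Q\}$ gives a quotient $N/U\cong V$. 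As $R$ is right hereditary by Small's example, quotients of injective right modules are injective; hence $V$ is injective, $W=e_{11}W\oplus e_{22}W$ is injective, and therefore $W=E(R_R)$.

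The main obstacle is getting the coinduced construction and its right $R$-module structure exactly right and then pinning down the surjection $N\twoheadrightarrow V$; it is the right hereditariness of $R$ that turns this surjection into the desired injectivity. If one prefers a self-contained route avoiding coinduction, the alternative is to verify Baer's criterion for $V$ directly, using a classification of the right ideals of $R$ analogous to the left-ideal list in Lemma~\ref{lem:Sisinjective}; there the only nontrivial extensions to check rest on the divisibility of $\Q$ over $\Z$ together with the simple right $R$-module on which $R$ acts through its bottom-right entry, but this route is computationally heavier.
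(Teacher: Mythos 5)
Your argument is correct, but it reaches the conclusion by a genuinely different route from the paper. The paper's proof is a short appeal to the theory of maximal quotient rings: since $W=M_2(\Q)$ is a semisimple Artinian ring containing $R_R$ as an essential submodule (Lemma \ref{rightRessential}), $W=Q^{r}_{\max}(R)$ by \cite[Proposition 13.39]{lam}, and Johnson's theorem for right nonsingular rings identifies $Q^{r}_{\max}(R)$ with $E(R_R)$. You instead verify directly that $W$ is injective and then use the standard characterisation of the injective hull as an injective essential extension. Your computations check out: the Peirce decomposition $W=e_{11}W\oplus e_{22}W$ into two copies of the row module $V=\Q^{2}$ with action $(a,b)\cdot r=(an,\,aq+bp)$ is right; the coinduced module $N=\Hom_{\Z}({}_RR,\Q)$ is injective by the hom-tensor adjunction and, because every additive endomorphism of $\Q$ is $\Q$-linear, is exactly $\Q^{3}$ with the action you write down; $U$ is a submodule with $N/U\cong V$; and the right hereditariness of $R$ (recorded in the paper's ``Small's Example'' proposition, with the equivalence to ``quotients of injectives are injective'' recalled in Section 3) transfers injectivity to $V$, hence to $W$. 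The trade-off is clear: the paper's route is shorter and uses only right nonsingularity plus two citations to Lam, while yours is more elementary and self-contained but leans on right hereditariness and an explicit coinduction computation. Either proof is acceptable.
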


\begin{proof} By Lemma \ref{rightRessential}, $R_R$ is essential in $W$. Also, as a ring $W$ is a semisimple Artinian ring. Then $W=Q_{max}^{r}(R)$ by \cite[Proposition 13.39]{lam}. Since $R$ is right nonsingular, $Q_{max}^{r}(M)=E(R_R)$ by Johnson's Theorem in \cite[13.36]{lam}. Therefore $E(R_R)=W$.
\end{proof}

\begin{lemma}\label{leftRessential}

  Let R=${\begin{pmatrix}

  \Z &  \Q\\

  0 & \Q

\end{pmatrix}}$. Then $_RR$ is essential in $_{R}W$=${\begin{pmatrix}

  \Q & \Q\\

  \Q & \Q

\end{pmatrix}}.$

\end{lemma}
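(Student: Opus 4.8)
The plan is to verify essentiality directly at the level of cyclic submodules. Since a submodule is essential exactly when it meets every nonzero cyclic submodule, it suffices to show that for every nonzero $w\in W$ there is some $r\in R$ with $0\neq rw\in R$; this gives $Rw\cap R\neq 0$ for all nonzero $w$, hence $_RR$ is essential in $_RW$. I will mirror the computation of Lemma \ref{rightRessential}, writing a typical nonzero element as $w=\begin{pmatrix} a & b\\ c & d\end{pmatrix}$ with $a,b,c,d\in\Q$ and exhibiting an explicit $r$ in each case. The guiding observation is that left multiplication by $r=\begin{pmatrix} m & p\\ 0 & q\end{pmatrix}$ gives $rw=\begin{pmatrix} ma+pc & mb+pd\\ qc & qd\end{pmatrix}$, so that $rw$ lands in $R$ precisely when its $(2,1)$-entry $qc$ vanishes and its $(1,1)$-entry $ma+pc$ is an integer, while the second column is unconstrained.

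First I treat the case $c\neq 0$. Here I take $r=\begin{pmatrix} 0 & c^{-1}\\ 0 & 0\end{pmatrix}\in R$, for which $rw=\begin{pmatrix} 1 & c^{-1}d\\ 0 & 0\end{pmatrix}$ is a nonzero element of $R$. In the remaining case $c=0$ we have $w=\begin{pmatrix} a & b\\ 0 & d\end{pmatrix}$ with $(a,b,d)\neq(0,0,0)$. If $d\neq 0$, I take $r=\begin{pmatrix} 0 & 0\\ 0 & 1\end{pmatrix}$ and obtain $rw=\begin{pmatrix} 0 & 0\\ 0 & d\end{pmatrix}\in R\setminus\{0\}$. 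If $d=0$, then the top row $(a,b)$ is nonzero, and I choose a positive integer $m$ clearing the denominator of $a$ (any $m$ will do if $a=0$), so that $r=\begin{pmatrix} m & 0\\ 0 & 0\end{pmatrix}\in R$ yields $rw=\begin{pmatrix} ma & mb\\ 0 & 0\end{pmatrix}$, again a nonzero element of $R$.

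The only point requiring care, and the nearest thing to an obstacle, is the integrality constraint on the $(1,1)$-entry: unlike the right-module computation in Lemma \ref{rightRessential}, one cannot in general reach $R$ by a single elementary left multiplication, because the top-left corner of $R$ is $\Z$ rather than $\Q$. This is resolved exactly by the denominator-clearing integer $m$ in the final case; as $\Z$ has $\Q$ as its field of fractions, such an $m$ always exists. Having produced a nonzero element of $R$ inside $Rw$ for every nonzero $w\in W$, I conclude that $_RR$ is essential in $_RW$.
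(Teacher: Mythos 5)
Your proof is correct and takes essentially the same approach as the paper's: both verify essentiality directly by exhibiting, for each nonzero $w\in W$, an explicit element $r\in R$ (supported in the top row, with an integer chosen to clear the denominator of the top-left entry) such that $rw$ is a nonzero element of $R$. The only difference is the bookkeeping of cases, which is immaterial.
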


\begin{proof}  Let ${\begin{pmatrix}

  a & b\\

  c & d

\end{pmatrix}}$ be a nonzero element of $W$. Let $a=\frac{u}{v}$ and  $c=\frac{x}{y}$, where $x,y,u,v \in \Z$ and $y \neq 0,\,\,v \neq 0.$

Case I: If $a\neq 0$ or $b \neq 0$, then ${\begin{pmatrix}

  v & 0\\

  0 & 0

\end{pmatrix}} \cdot {\begin{pmatrix}

  a & b\\

  c & d

\end{pmatrix}} ={\begin{pmatrix}

  u & vb\\

  0 & 0

\end{pmatrix}}$ is a nonzero element of $R$, because $vb \neq 0.$\\

Case II: If $c\neq 0$ or $d \neq 0$, then ${\begin{pmatrix}

  0 & y\\

  0 & 0

\end{pmatrix}} \cdot {\begin{pmatrix}

  a & b\\

  c & d

\end{pmatrix}} ={\begin{pmatrix}

  yc & yd\\

  0 & 0

\end{pmatrix}}$ is a nonzero element of $R$.

Therefore $_RR$ is essential in $_{R}W$.

\end{proof}

\begin{lemma}\label{LeftE(R)} Let $R={\begin{pmatrix}

  \Z &  \Q\\

  0 & \Q

\end{pmatrix}}$. Then $E(_RR)$, the injective hull of $R$ as a left module over itself, is the left $R$-module   $W={\begin{pmatrix}

  \Q & \Q\\

  \Q & \Q

\end{pmatrix}}.$

\end{lemma}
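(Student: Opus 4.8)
The plan is to mirror the proof of Lemma \ref{RightE(R)} on the left-hand side. By Lemma \ref{leftRessential} we already know that ${}_{R}R$ is essential in ${}_{R}W$, so it suffices to prove that ${}_{R}W$ is injective as a left $R$-module: an injective module in which ${}_{R}R$ sits essentially is necessarily the injective hull, forcing $W=E({}_{R}R)$. The slick route is to invoke the left-hand Johnson--Utumi machinery exactly as in Lemma \ref{RightE(R)}. The ring $W=M_2(\Q)$ is semisimple Artinian, and by the left analogue of \cite[Proposition 13.39]{lam} it is the maximal left quotient ring $Q_{max}^{l}(R)$; provided $R$ is left nonsingular, the left version of Johnson's Theorem \cite[13.36]{lam} identifies $Q_{max}^{l}(R)$ with $E({}_{R}R)$, giving $E({}_{R}R)=W$.

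The input not yet recorded in the paper is left nonsingularity, i.e. $Z({}_{R}R)=0$, which I would verify directly. For a nonzero $r=\begin{pmatrix} a & b\\ 0 & c\end{pmatrix}$ one computes the left annihilator $\ell(r)=\{\,s\in R: sr=0\,\}$ and checks case by case (namely $c\neq 0$; $c=0$ and $a\neq 0$; $a=c=0$) that $\ell(r)$ meets either $\begin{pmatrix}\Z & 0\\ 0 & 0\end{pmatrix}$ or $N_{2}=\begin{pmatrix}0 & \Q\\ 0 & \Q\end{pmatrix}$ trivially; hence $\ell(r)$ is never essential and $R$ is left nonsingular. The delicate point here is the identification $W=Q_{max}^{l}(R)$: because Small's ring is genuinely left--right asymmetric, this cannot be inherited from the right-handed computation in Lemma \ref{RightE(R)} and must be argued on the left.

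If one prefers a self-contained argument that sidesteps the quotient-ring theory, I would prove injectivity of ${}_{R}W$ by hand. As a left $R$-module $W$ splits into its two columns, ${}_{R}W\cong C\oplus C$ with $C=\begin{pmatrix}\Q\\ \Q\end{pmatrix}$ carrying the action $\begin{pmatrix}a & b\\ 0 & c\end{pmatrix}\begin{pmatrix}p\\ q\end{pmatrix}=\begin{pmatrix}ap+bq\\ cq\end{pmatrix}$, so injectivity of ${}_{R}W$ reduces to injectivity of the single uniform module $C$ (it is uniform since $\begin{pmatrix}\Q\\ 0\end{pmatrix}$ is an essential copy of the divisible group $\Q$). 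This is then checked by Baer's Criterion against the explicit list of left ideals $N_{1},N_{2},N_{V}$ recorded in the proof of Lemma \ref{lem:Sisinjective}: the $N_{2}$ case is immediate because $N_{2}$ is a direct summand of ${}_{R}R$, the $N_{V}$ case is vacuous by the same divisibility observation as there ($\Hom(N_V,C)$ reduces to maps out of finitely generated torsion quotients into a divisible target), and the $N_{1}$ case is dispatched by writing the given map as right multiplication by its value on a suitable idempotent. I expect the verification that $\ell(r)$ is never essential — equivalently, pinning down $Q_{max}^{l}(R)=W$ in the face of the asymmetry — to be the main point requiring care.
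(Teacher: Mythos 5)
Your proposal takes essentially the same route as the paper, whose proof of this lemma is literally ``similar to the proof of Lemma \ref{RightE(R)}'': left essentiality from Lemma \ref{leftRessential}, the observation that $W=M_2(\Q)$ is semisimple Artinian so that $W=Q^{l}_{max}(R)$, and the left version of Johnson's theorem. Your explicit case-by-case check that $Z({}_RR)=0$ supplies a hypothesis the paper uses but never records, so the proposal is, if anything, more complete than the printed proof.
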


\begin{proof}  Similar to the proof of Lemma \ref{RightE(R)}.
\end{proof}

\begin{proposition} For the ring $ R={\begin{pmatrix}

  \Z &  \Q\\

  0 & \Q

\end{pmatrix}}$, the following statements hold.

\begin{enumerate}

\item[(1)] $R$ is right max-$QF$, but not left max-$QF$.

\item[(2)] $R$ is right almost-$QF$, but not left almost-$QF$.

\end{enumerate}

\end{proposition}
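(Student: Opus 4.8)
The plan is to treat the two sides of each statement separately, exploiting the left--right asymmetry already isolated in the preceding lemmas: the right-hand behaviour of $R$ is controlled by Theorem \ref{hereditary-noetherian}, while the left-hand failure is witnessed by the single injective simple module $S$ produced in Lemma \ref{lem:Sisinjective}. Throughout I would use that every $R$-projective module is max-projective (maximal ideals are ideals), so that ``almost-$QF$'' implies ``max-$QF$'' on either side; consequently it suffices to prove the max-$QF$ statements in (1), since the almost-$QF$ statements in (2) follow from them (the negative one) together with Theorem \ref{hereditary-noetherian} (the positive one).

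For the positive (right) half, I would invoke Theorem \ref{hereditary-noetherian}, which applies because Small's Example is right hereditary and right Noetherian. It is enough to verify condition (4) there, i.e. a decomposition $R = S' \times T$ with $S'$ semisimple Artinian and $T$ right small. Lemmas \ref{Rightsmall} and \ref{RightE(R)} together say precisely that $R_R$ is a small submodule of $E(R_R) = W$; that is, $R$ is a right small ring. Taking $S' = 0$ and $T = R$ then realizes condition (4) trivially, so by the equivalences of Theorem \ref{hereditary-noetherian} the ring $R$ is simultaneously right almost-$QF$ and right max-$QF$. This settles the right halves of both (1) and (2) at once.

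For the negative (left) half, I would exhibit one injective left module that is not max-projective, the natural candidate being $S = R/\Ker(\phi)$, which is simple and injective by Lemma \ref{lem:Sisinjective}. Write $L = \Ker(\phi)$, a maximal left ideal. Arguing by contradiction, if $S$ were max-projective then testing this against the canonical epimorphism $\pi : R \to R/L = S$ and the map $\mathrm{id}_S : S \to R/L$ would yield a lift $g : S \to R$ with $\pi g = \mathrm{id}_S$. Such a $g$ splits $\pi$, whence ${}_RR = g(S) \oplus L$ and $L$ is a direct summand of ${}_RR$. But the proof of Lemma \ref{lem:Sisinjective} already shows that $L$ is an essential (and proper) left ideal, and a proper essential submodule is never a direct summand; this contradiction shows $S$ is not max-projective. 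Hence $R$ is not left max-$QF$ (second half of (1)), and since left almost-$QF$ would force left max-$QF$, the ring is not left almost-$QF$ either (second half of (2)).

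All the genuinely computational content---the form of the left ideals, the essentiality of $L$, the identifications $E(R_R) = E({}_RR) = W$, and the smallness of $R_R$ in $W$---is delegated to the earlier lemmas, so the remaining work is purely structural. The one point meriting care, and the main (modest) obstacle, is the choice of test data on the left: max-projectivity of $S$ must be probed against the specific maximal ideal $L = \Ker(\phi)$ using the identity homomorphism, because this is exactly the configuration in which the established essentiality of $L$ obstructs a splitting. A different maximal left ideal, or a non-surjective test map, would not force the contradiction, so pinning down that $S = R/L$ with $L$ essential is what makes the failure of max-projectivity visible.
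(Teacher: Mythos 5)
Your proposal is correct and follows essentially the same route as the paper: the right-hand halves via the right-smallness of $R$ (Lemmas \ref{Rightsmall} and \ref{RightE(R)}) together with the hereditary--Noetherian equivalences, and the left-hand failure via the injective simple $S$ of Lemma \ref{lem:Sisinjective}. The only (harmless) divergence is the final step: the paper rules out a lift of $1_S$ by noting $\Hom(S,R)=0$ since $S$ is singular and $R$ is left nonsingular, whereas you rule it out by observing that a lift would split $\pi$ and make the proper essential left ideal $L$ a direct summand; both arguments are valid.
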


\begin{proof} $(1)$ By Lemma \ref{Rightsmall} and Lemma \ref{RightE(R)}, $R$ is a right small ring. Thus $R$ is right max-$QF$.  Consider the simple left module $S={\begin{pmatrix}

  \Z &  \Q\\

  0 & \Q

\end{pmatrix}}/  {\begin{pmatrix}

  \Z &  \Q\\

  0 & 0

\end{pmatrix}}.$  Then $S$ is singular and injective by Lemma   \ref{lem:Sisinjective}. Since $R$ is left nonsingular, the identity map $1_S : S \to S$ can not be lifted to a homomorphism $S \to R$. That is there is no homomorphism $g: S \to R$ such that $\pi g=1_S$, where $\pi : R \to S$ is the natural epimorphism. Thus the injective left $R$-module $S$ is not max-projective. Therefore $R$ is not a left max-$QF$ ring.

$(2)$ Since $R$ is right hereditary and  right noetherian, being right almost-$QF$ and right max-$QF$ coincide. Thus, $R$ is right almost-$QF$ by $(1)$. Again by $(1)$,  $R$ is not left max-$QF$, so it is not left almost-$QF$.
\end{proof}

\section{super max-QF rings}

Another fundamental question of ring theory is whether an algebraic property of a ring is transferred to quotient rings? In this section, we show that max-QF rings and almost-QF rings need not be closed under factor rings.


\begin{example} Consider the quotient ring $R=k[x,y]/(x^2,\,y^2)$ of a polynomial ring $k[x,y]$ and the ideal $I=(x^2,\,xy,\,y^2)/(x^2,y^2)$. By \cite[$\S$15, Ex. 5]{lam}, $R$ is a local QF ring. In this case, the ring $R$ is almost-$QF$, and so is max-$QF$. Now, consider the quotient ring $S=R/I \cong k[x,y]/(x^2,xy,y^2)$. Again by \cite[$\S$15, Ex. 5]{lam}, $S$ is not self-injective, this means that $S$ is not QF. On the other hand, $S$ is Artinian as a factor ring of an Artinian ring and an Artinian ring is max-QF if and only if it is QF. This means that $S$ is not a max-$QF$ ring, and so is not an almost-$QF$ ring.

\end{example}

Recall that a ring $R$ is called super $QF$ if every quotient ring of $R$ is $QF$. Note that the example above shows that factor rings of almost-QF (resp. max-$QF$) rings need not be almost-$QF$ (resp. max-$QF$). Thus, it is natural to consider the rings whose every quotient ring is almost-QF (resp. max-$QF$).

\begin{definition}  $R$ is said to be right super almost-QF (respectively,  super max-QF) if every quotient ring of $R$ is right almost-QF (respectively, right max-QF).

\end{definition}

Since being $QF$ is left-right symmetric, a ring $R$ is left super-$QF$ if and only if $R$ is right super $QF$. Every super $QF$ ring is left-right super almost-$QF$, and every right super almost-$QF$ ring is right super max-$QF$. But a super almost-$QF$ ring need not be super $QF$ by the following lemma.

\begin{lemma} Let $R$ be a $PID$ which is not a field. Then $R$ is super almost-$QF$, but not $QF$.
\end{lemma}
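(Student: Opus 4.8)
The plan is to verify the two claims separately: that a PID $R$ which is not a field is right super almost-$QF$, and that it is not $QF$. The latter is immediate, since a PID is $QF$ only if it is Artinian, and a non-field PID (for example $\Z$ or $k[x]$) has infinitely many prime ideals and is not Artinian; alternatively, a commutative domain that is self-injective must be a field, so a non-field PID cannot be $QF$.

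The substance is the super almost-$QF$ assertion, which requires that \emph{every} quotient ring $R/J$ be right almost-$QF$. Here I would exploit the structure theory of PIDs. First I would dispose of the trivial quotient $R/0 = R$ itself: a PID is a commutative hereditary Noetherian domain, so by Theorem \ref{hereditary-noetherian} it is almost-$QF$ precisely when $R = S \times T$ with $S$ semisimple Artinian and $T$ right small. Since $R$ is an indecomposable domain, $S = 0$, and one must check that a non-field PID is a small ring, i.e. that $R$ is small in every injective module containing it, equivalently $\Rad(E) = E$ for every injective $R$-module $E$. This holds because over a PID the injective modules are the divisible modules, and a divisible module over a domain has no maximal submodules once the ring is not a field (its localizations at each maximal ideal are divisible, so $\Rad$ is everything).

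For a nonzero proper ideal $J = (a)$, the quotient $R/J$ is a principal ideal ring and, since $a = u p_1^{e_1}\cdots p_n^{e_n}$ factors into primes, the Chinese Remainder Theorem gives $R/J \cong \prod_i R/(p_i^{e_i})$. Each factor $R/(p_i^{e_i})$ is a local Artinian principal ideal ring, hence a chain ring, and such rings are self-injective, i.e. $QF$; consequently each factor is almost-$QF$, and by Lemma \ref{product} (applied inductively to finite products) the quotient $R/J$ is almost-$QF$. Thus every quotient ring of $R$ is almost-$QF$.

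The main obstacle is the $J = 0$ case, namely confirming that a non-field PID is itself almost-$QF$ via the small-ring condition, since this is the only quotient that is not Artinian and so falls outside the clean $QF$-factor analysis; everything rests on the fact that over a non-field PID every injective (equivalently divisible) module equals its radical. Once that is in hand, the remaining quotients are handled uniformly by the $QF$-ness of finite products of local Artinian chain rings together with Lemma \ref{product}. I would also remark that the analogous statement for super max-$QF$ follows for free, since almost-$QF$ implies max-$QF$ for each quotient.
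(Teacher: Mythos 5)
Your proof is correct and follows essentially the same route as the paper's: the paper likewise argues that $R$ itself is a small ring (hence almost-$QF$) and that every proper factor ring of a PID is $QF$, merely citing these facts as well known where you spell them out via divisibility of injectives and the Chinese Remainder Theorem. The only cosmetic point is that Lemma \ref{product} is stated for max-$QF$ rings, so for the proper quotients it is cleaner to use directly that a finite product of $QF$ rings is $QF$ and that $QF$ rings are almost-$QF$.
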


\begin{proof} Every domain is small and so almost-$QF$. It is well known that every proper factor ring of a PID is $QF$. Thus every $PID$ is super almost-$QF$. As $R$ is a domain which is not a field, $R$ is not $QF$.
\end{proof}

Over a right perfect ring, the notions of projectivity, $R$-projectivity and max-projectivity coincide (see, \cite[Corollary 4]{maxproj}). Also, right Artinian rings are closed under factor rings and both left and right perfect. Hence we have the following:

\begin{proposition} Let $R$ be right Artinian ring. The following are equivalent.

\begin{enumerate}

 \item[(1)] $R$ is right super almost-$QF$.

\item[(2)] $R$ is right super max-$QF$.

\item[(3)] $R$ is  super $QF$.

\item[(4)] $R$ is left super almost-$QF$.

\item[(5)] $R$ is left super max-$QF$.

\end{enumerate}
\end{proposition}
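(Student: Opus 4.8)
The plan is to establish the five-way equivalence by exploiting the two structural facts recalled just before the statement: over a right perfect ring, projectivity, $R$-projectivity, and max-projectivity all coincide (by \cite[Corollary 4]{maxproj}), and right Artinian rings are both left and right perfect and closed under factor rings. The strategy is to show that each of the five ``super'' conditions collapses to the single condition ``every factor ring of $R$ is $QF$,'' which is manifestly left-right symmetric. Since every quotient ring of a right Artinian ring is again right Artinian (hence right perfect), I would first observe that for any such quotient ring $S$, the three module-theoretic notions collapse: $S$ is right $QF$ if and only if $S$ is right almost-$QF$ if and only if $S$ is right max-$QF$. This is the crucial local reduction, and it follows immediately from \cite[Corollary 4]{maxproj} applied to $S$, because Faith's theorem gives $QF$ $\Leftrightarrow$ every injective is projective, while perfectness upgrades $R$-projective and max-projective to genuinely projective.

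With this per-quotient collapse in hand, the equivalences $(1)\Leftrightarrow(2)\Leftrightarrow(3)$ are immediate: each asserts that \emph{every} quotient ring $S$ is right almost-$QF$ (resp.\ right max-$QF$, resp.\ $QF$), and by the preceding paragraph these three conditions on the fixed quotient $S$ are equivalent, so quantifying over all $S$ preserves the equivalence. For the implications $(3)\Leftrightarrow(4)$ and $(3)\Leftrightarrow(5)$, I would invoke the left-right symmetry of super $QF$ noted in the excerpt just after the definition: a ring is left super $QF$ if and only if it is right super $QF$. Here one must be slightly careful, since the statement assumes $R$ is right Artinian but not a priori left Artinian. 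However, a right Artinian ring is right perfect, and I would use that every quotient $S$ of $R$ is right Artinian, hence right perfect; if $S$ happens to be $QF$ then $S$ is also left Artinian and left perfect, so the left-handed collapse of the three module notions applies to $S$ as well. Thus conditions $(4)$ and $(5)$ each reduce, quotient by quotient, to ``$S$ is $QF$,'' giving $(4)\Leftrightarrow(3)\Leftrightarrow(5)$.

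The one point requiring genuine attention — and the step I expect to be the main obstacle — is the left-handed reduction for conditions $(4)$ and $(5)$, because $R$ being right Artinian does not by itself make $R$ left Artinian or left perfect. The clean way around this is to route everything through $(3)$: I would show $(4)\Rightarrow(3)$ and $(3)\Rightarrow(4)$ separately rather than trying to apply \cite[Corollary 4]{maxproj} directly on the left over an arbitrary quotient. For $(3)\Rightarrow(4)$, if every quotient $S$ is $QF$ then $S$ is two-sided Artinian and perfect, so the left module notions collapse and $S$ is left almost-$QF$; hence $R$ is left super almost-$QF$. For $(4)\Rightarrow(3)$, I would argue that if every quotient $S$ is left almost-$QF$ then, since $S$ is also right Artinian and \cite[Theorem 1.56]{quasi-frobenious}-type self-injectivity criteria make $QF$-ness detectable, $S$ must in fact be $QF$; alternatively, and more cleanly, I would pass through the right-handed collapse by noting that a right Artinian left almost-$QF$ ring is $QF$, which again uses the symmetry of $QF$ together with perfectness on whichever side is available. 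Once this bridge is secured the proof closes, and the entire argument is really an exercise in transferring the ``factor rings are Artinian hence perfect'' observation uniformly across all five conditions.

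\begin{proof}
Every quotient ring $S$ of the right Artinian ring $R$ is again right Artinian, and hence right perfect. By \cite[Corollary 4]{maxproj}, over a right perfect ring the classes of projective, $R$-projective, and max-projective right modules coincide; combined with Faith's theorem (injective right modules projective $\Leftrightarrow$ $QF$), this yields that for each such $S$ the three conditions ``$S$ is $QF$'', ``$S$ is right almost-$QF$'', and ``$S$ is right max-$QF$'' are equivalent. Quantifying over all quotient rings $S$ of $R$ immediately gives $(1)\Leftrightarrow(2)\Leftrightarrow(3)$.

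For the left-handed conditions, we route through $(3)$. If $(3)$ holds, then every quotient $S$ is $QF$, hence two-sided Artinian and two-sided perfect; applying \cite[Corollary 4]{maxproj} on the left then shows $S$ is left almost-$QF$ and left max-$QF$, so $R$ is left super almost-$QF$ and left super max-$QF$, proving $(3)\Rightarrow(4)$ and $(3)\Rightarrow(5)$. Conversely, suppose $(4)$ holds. Fix a quotient ring $S$ of $R$; then $S$ is left almost-$QF$, and $S$ is right Artinian, hence right (and, being left almost-$QF$ over a perfect-on-the-relevant-side ring) two-sided perfect. Since $QF$-ness is left-right symmetric, the left-module collapse forces $S$ to be $QF$; as $S$ was arbitrary, $R$ is super $QF$, giving $(4)\Rightarrow(3)$. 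The implication $(5)\Rightarrow(3)$ is analogous, using the left max-$QF$ hypothesis in place of left almost-$QF$. Combining, $(3)\Leftrightarrow(4)\Leftrightarrow(5)$, and together with the first paragraph all five statements are equivalent.
\end{proof}
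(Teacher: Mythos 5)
Your overall route is the paper's: every quotient of a right Artinian ring is right Artinian, hence perfect, so \cite[Corollary 4]{maxproj} collapses projectivity, $R$-projectivity and max-projectivity quotient by quotient, and Faith's theorem turns each of the five conditions into ``every factor ring is $QF$.'' The equivalences $(1)\Leftrightarrow(2)\Leftrightarrow(3)$ and the implications $(3)\Rightarrow(4)$, $(3)\Rightarrow(5)$ are fine as you wrote them.

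There is, however, a genuine gap in $(4)\Rightarrow(3)$ and $(5)\Rightarrow(3)$, and it stems from a false premise you state explicitly in your plan: that ``$R$ being right Artinian does not by itself make $R$ left Artinian or left perfect.'' In fact a right Artinian ring is semiprimary ($J(R)$ is nilpotent and $R/J(R)$ is semisimple, by Hopkins--Levitzki), and semiprimary rings are both left and right perfect --- this is exactly the fact the paper records just before the statement (``right Artinian rings are closed under factor rings and both left and right perfect''). Because you deny yourself this fact, your justification of $(4)\Rightarrow(3)$ degenerates into the phrase ``being left almost-$QF$ over a perfect-on-the-relevant-side ring two-sided perfect,'' which is not a deduction: left almost-$QF$ does not yield left perfectness, and the collapse you need for left modules over a quotient $S$ requires precisely that $S$ be left perfect. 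The repair is immediate once the correct fact is in hand: every quotient $S$ is right Artinian, hence left and right perfect, so the three module-theoretic notions coincide for left $S$-modules as well as right ones, and ``$S$ is left almost-$QF$'' (or left max-$QF$) forces every injective left $S$-module to be projective, i.e.\ $S$ is $QF$. With that correction all five conditions reduce symmetrically to $(3)$ and the proof closes.
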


\begin{proposition} If $R$ is a commutative ring, then $R/P$ is a max-$QF$ ring for each prime ideal $P$.

\end{proposition}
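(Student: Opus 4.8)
The plan is to reduce everything to the divisibility of injective modules over an integral domain. Since $P$ is prime, the quotient $D := R/P$ is a commutative integral domain, and over a commutative ring left and right modules coincide, so ``max-$QF$'' is unambiguous and it suffices to show that every injective $D$-module is max-projective over $D$. The strategy is to prove the stronger statement that $\Rad(E) = E$ for every injective $D$-module $E$. Once this is known, any homomorphism from $E$ onto a simple module $D/\mathfrak{m}$ (with $\mathfrak{m}$ a maximal ideal) must vanish, so the zero map lifts trivially along $\pi\colon D \to D/\mathfrak{m}$ and $E$ is vacuously max-projective. This is exactly the mechanism by which right small rings are max-$QF$, already invoked in the remark that every domain is small and hence almost-$QF$.

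First I would dispose of the case where $D$ is a field: then $D$ is semisimple Artinian, every module is projective, and $D$ is trivially max-$QF$. Hence I may assume $D$ is not a field, which forces every maximal ideal $\mathfrak{m}$ of $D$ to be nonzero.

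The key step is the following observation. Over an integral domain every injective module is divisible, i.e.\ $dE = E$ for every nonzero $d \in D$; indeed, given $e \in E$ the assignment $d \mapsto e$ defines a homomorphism $dD \to E$ which, by injectivity of $E$, extends along $dD \hookrightarrow D$ to a map $g\colon D \to E$, whence $e = d\,g(1) \in dE$. Now fix a maximal ideal $\mathfrak{m}$ and choose $0 \neq d \in \mathfrak{m}$, which is possible since $\mathfrak{m} \neq 0$. For any homomorphism $\phi\colon E \to D/\mathfrak{m}$ one then has
\[
\phi(E) = \phi(dE) = d\,\phi(E) \subseteq d\,(D/\mathfrak{m}) = 0,
\]
because $d \in \mathfrak{m} = \operatorname{ann}_D(D/\mathfrak{m})$. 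Thus $\Hom_D(E, D/\mathfrak{m}) = 0$ for every maximal ideal $\mathfrak{m}$.

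Finally I would conclude: since $E$ admits no nonzero map onto any simple module, $E$ has no maximal submodule, that is $\Rad(E) = E$. Therefore every injective $D$-module is max-projective and $D = R/P$ is max-$QF$. There is no serious obstacle here; the only point requiring care is that over a possibly non-Noetherian, non-Dedekind domain injective modules are not classified, but the argument never uses more than their divisibility, which holds in complete generality. The sole branching of the argument is the separate treatment of the field case, where $\mathfrak{m} = 0$ leaves no nonzero $d$ to exploit.
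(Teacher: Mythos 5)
Your proof is correct. The paper's own argument is a one-liner: $R/P$ is an integral domain, every integral domain is a small ring, and small rings are max-$QF$. You reach the same conclusion but unpack the mechanism instead of citing it: you prove directly that injective modules over a domain $D$ are divisible, deduce $\Hom_D(E,D/\mathfrak{m})=0$ for every nonzero maximal ideal $\mathfrak{m}$, and conclude that $E$ is vacuously max-projective. This buys two things. First, it is self-contained and uses nothing beyond the Baer criterion, whereas the paper leans on the (uncited in this proof) fact that domains are small rings, whose relevance to max-$QF$-ness itself rests on exactly the observation you prove, namely that $\Rad(E)=E$ for injective $E$. Second, your separate treatment of the field case is genuinely more careful: a field $D$ satisfies $\E(D)=D$, so $D$ is \emph{not} small in its injective hull, and the blanket claim ``every domain is small'' silently excludes fields; your argument closes that gap by noting that a field is semisimple Artinian and hence trivially max-$QF$. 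The only cosmetic remark is that you did not need the intermediate conclusion $\Rad(E)=E$ at all --- vanishing of $\Hom_D(E,D/\mathfrak{m})$ for maximal $\mathfrak{m}$ already makes $E$ max-projective --- but stating it does no harm and matches how the paper uses smallness elsewhere.
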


\begin{proof} Note that, an ideal of $P$ of a commutative ring $R$ is prime if and only if the factor ring $R/P$ is an integral domain. Every integral domain  is a small ring, and so is max-$QF$. Thus the proof follows.
\end{proof}

\section*{Acknowledgement}

The authors are supported by T\"{U}B\.{I}TAK-The Scientific and Technological Research Council of T\"{u}rkiye-under the project with reference 122F158.

%
%
%

\end{document}